\documentclass[11pt]{amsart} 

   \usepackage[a4paper, margin=100pt]{geometry}     
     \normalsize 
    \addtocontents{toc}{\setcounter{tocdepth}{2}} 
  
    \usepackage{xcolor}
    \usepackage{hyperref}
         \hypersetup{colorlinks, citecolor=blue, filecolor=blue, linkcolor=black, urlcolor=blue}
    \usepackage{soul}
    \usepackage[numbers]{natbib}    
    \usepackage{amssymb}
    \usepackage{tikz-cd}

\usepackage{marginnote}

\usepackage{soul} 

\DeclareMathOperator{\GL}{\rm{GL}}
\DeclareMathOperator{\SL}{\rm{SL}}
\DeclareMathOperator{\Aff}{\rm{Aff}}

\newcommand{\R}{\mathbb{R}}\newcommand{\N}{\mathbb{N}}

\newcommand{\C}{\mathbb{C}}

\newcommand{\id}{\mathrm{Id}}

\newcommand{\dimHD}{\operatorname{dim_{H}}}

\newcommand{\dimUB}{\operatorname{\overline{dim}_{B}}}
\newcommand{\dimAff}{\operatorname{dim_{Aff}}}

\newcommand{\ua}{{\underline{a}}}

\newcommand{\cC}{\mathcal{C}}

\newcommand{\cF}{\mathcal{F}}

\newcommand{\cH}{\mathcal{H}}

\newcommand{\cU}{\mathcal{U}}

\newcommand{\cW}{\mathcal{W}}

\newcommand{\bF}{\mathbb{F}}

\newcommand{\bR}{\mathbb{R}}

\numberwithin{equation}{section}

\newtheorem{theorem}{Theorem}[section] 

\newtheorem{lemma}[theorem]{Lemma}
\newtheorem*{lemma*}{Lemma}
\newtheorem{proposition}[theorem]{Proposition}
\newtheorem*{proposition*}{Proposition}
\newtheorem{conjecture}[theorem]{Conjecture}

\newtheorem*{question*}{Question}
\newtheorem*{theorem*}{Theorem}
\newtheorem*{claim*}{Claim}
\newtheorem*{conjecture*}{Conjecture}

\newtheorem{theoremain}{Theorem}

\theoremstyle{definition}
\newtheorem{definition}[theorem]{Definition}

\theoremstyle{remark}
\newtheorem{remark}[theorem]{Remark}

    \usepackage{bigints}
    \usepackage{xfrac}
    \usepackage{mathrsfs}
    \usepackage{comment}
    \usepackage{verbatim}
    \usepackage{adjustbox}
    \usepackage{graphicx}
    \usepackage{import}

    \newcommand{\sldr}{\mathrm{SL}(d,\R)}
    \newcommand{\gldr}{\mathrm{GL}(d,\R)}
    
    \newcommand{\matrixexp}{\mathfrak{exp}}

\addtocontents{toc}{\setcounter{tocdepth}{1}}

\begin{document}
\title[Cantor sets in higher dimensions II]
{Cantor sets in higher dimensions II: 
Optimal dimension constraint for stable intersections
}

\author{Meysam Nassiri}
\address{School of Mathematics, Institute for Research in Fundamental Sciences (IPM), P.O. Box 19395-5746, Tehran, Iran}
\email{nassiri@ipm.ir}
\author{Mojtaba Zareh Bidaki}
\address{School of Mathematics, Institute for Research in Fundamental Sciences (IPM), P.O. Box 19395-5746, Tehran, Iran}
\email{mojtabazare@ipm.ir}


\begin{abstract} 
It is well known that a pair of compact sets in $\mathbb{R}^d$ ($d \in \mathbb{N}$) can be separated by small deformations if the sum of their upper box dimensions is less than $d$. In this paper, we demonstrate that this dimension constraint is optimal for regular Cantor sets. Specifically, for any prescribed upper box dimensions whose sum is greater than $d$, we construct classes of pairs of regular Cantor sets that exhibit $C^{1+\alpha}$-stable intersections.

Our method is geometrically flexible, enabling the construction of examples with arbitrarily small thickness in both projectively hyperbolic and nearly conformal regimes. These results also extend to the complex setting for holomorphic Cantor sets in $\mathbb{C}^d$. The proof relies on the ``covering criterion" for stable intersection introduced in the first part of this series \cite{NZ1}, which generalizes the ``recurrent compact set criterion" of Moreira--Yoccoz to higher dimensions.
\end{abstract}

\maketitle
\tableofcontents

\section{Introduction}

The arithmetic difference set
\[
K-K' := \{k-k' : k \in K,\; k' \in K'\}
\]
of two compact sets \(K, K' \subset \mathbb{R}^d\) is a central object in fractal geometry and dynamical systems. A fundamental problem in this area is to identify conditions ensuring that \(K-K'\) has positive Lebesgue measure (or has non-empty interior).

It is well established that for compact sets satisfying the dimensional deficit
\[
\dimUB(K) + \dimUB(K') < d,
\]
the set \(K-K'\) has zero Lebesgue measure. Consequently, almost all translations $K'+t$ are disjoint from $K$. Here, $\dimUB$ denotes the upeer box dimension (see \S \ref{sec: pre}).

The regime where \(\dimUB (K) + \dimUB (K') > d\) is markedly different and considerably more subtle. 
Here, one expects not only positive measure but potentially the presence of interior points. 
This aligns with the Palis conjecture \cite{Palis-conj-cantors} regarding the arithmetic sum of regular Cantor sets on the real line. For such sets (including self-affine sets) one considers the stronger notion of \emph{stable intersection}. This property implies that not only does \(K-K'\) possess a non-empty interior, but that this intersection persists robustly under small perturbations of the defining dynamical systems.
Recall that a regular Cantor set, by definition, is the unique invariant set under the action of expanding smooth maps with restricted to a specific Markov partition. Two regular Cantor sets are $C^r$-close if their generating maps and Markov partitions are close in the $C^r$ topology and Hausdorff topology, respectively. For precise definitions we refer to 
\cite{NZ1}.

While this problem is completely understood in one dimension ($d=1$), where the Palis conjecture has been established in both the real \cite{MY} and complex \cite{AMZ25} settings, the higher dimensional case ($d\ge 2$) remains largely unexplored. A primary question is whether the dimension constraint is optimal for all $d\in\N$. 
The aim of this paper is to answer this question affirmatively.

\begin{theoremain}
\label{thm: main1}
Let $d \in \mathbb{N}$  and $p,q  \in (0,d)$ with $p+q>d$. Then there exists a pair of regular Cantor sets $(K,K')$ in $\mathbb{R}^d$ with $\dimUB(K)= p$ and $\dimUB(K')=q$ such that the pair $(K,K')$ has $\cC^{1+\alpha}$-stable intersection ($\alpha>0$).
\end{theoremain}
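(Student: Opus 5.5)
We will derive Theorem \ref{thm: main1} from the covering criterion for stable intersections of \cite{NZ1}, which generalizes the Moreira--Yoccoz recurrent compact set criterion. In the form we use, it says the following. Let $(K,K')$ be regular Cantor sets in $\R^d$, and consider the space of relative configurations of pairs of pieces (relative position, scale, and shape up to affine renormalization). Suppose this space contains a compact set $\mathcal{L}$ with a nonempty interior property, such that each configuration in $\mathcal{L}$ has a pair of sub-pieces (one refinement step of $K$ and/or $K'$) whose renormalized configuration lies in the interior of $\mathcal{L}$. Then $(K,K')$ has $C^{1+\alpha}$-stable intersection. The recurrence is an open condition and survives $C^{1+\alpha}$ perturbations by bounded distortion. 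So the whole task is to build affine (then slightly perturbed) Cantor sets of prescribed upper box dimensions carrying such a recurrent family.

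The construction is self-affine and grid based. Fix large integers $N$ and choose $K$ as the attractor of an affine IFS of contractions by $1/N$ on the cube $[0,1]^d$, using a set $A$ of $m$ subcubes of the $N$-grid, so $\dimUB K=\log m/\log N$. Choose $K'$ similarly with $m'$ subcubes and $\log m'/\log N$. Because $p,q$ are arbitrary reals, we cannot take the same $N$ and exact counts. We therefore use different contraction rates $N_1,N_2$, or mixed-rate Markov systems in which some pieces are refined over several steps. A diagonal affine cylinder structure with rates $\lambda_i$ lets the box dimension vary continuously, and we tune the rates so that the dimensions equal $p$ and $q$ exactly. The combinatorial heart is a \emph{covering property}. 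We choose $A$, $A'$ so that $A-A'$, viewed as a set of grid vectors, covers a fattened neighbourhood of the difference cube $[-1,1]^d$ with uniform margin. In other words, every relative translation $t$ in a slightly enlarged cube is, after one refinement, mapped by some pair of subcubes to a renormalized translation again in the interior of the cube. This is possible precisely because $mm'>N^d$ (equivalently $p+q>d$) gives enough pairs to cover the $\sim N^d$ cells of the difference lattice. A concrete choice is to take $A$ a union of full "columns" in some coordinates and $A'$ complementary "rows", arranged so that $A+(-A')$ contains a full neighbourhood grid. We then thin them randomly or by a lattice argument down to the exact counts, keeping the covering.

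When the scales of $K$ and $K'$ differ, the renormalized configuration must also record the scale ratio and the shape. We handle this as in Moreira--Yoccoz: take $\mathcal{L}$ to be configurations (translation $t$, log-scale ratio $s$, linear part near identity) with $t$ in the fattened cube and $s$ in a fundamental interval. At each step we refine whichever set is currently larger, which keeps $s$ bounded. The covering property at each grid scale then gives recurrence into the interior. In the conformal-diagonal case the linear part stays near the identity. For the projectively hyperbolic or non-conformal variants, one also needs control of the distortion cone, which the criterion in \cite{NZ1} tolerates.

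The main obstacle is achieving exact prescribed dimensions $p,q$ while keeping the covering with a \emph{uniform} interior margin. Small $p$ or $q$ (e.g.\ $p$ near $d-q$, with $q$ small) forces very sparse sets, and a crude grid count only gives covering up to boundary effects. I would first fix a rational approximation $mm'\ge (1+\epsilon)N^d$ with a combinatorial covering lemma for sparse grid sets, namely product sets of one-dimensional Newhouse-thick-free "difference covering" sets. Taking products coordinatewise reduces the problem to $d=1$, where such covering pairs with small dimension and sum $>1$ are known. Then I would adjust contraction rates continuously, keeping the covering open condition, to hit $p$ and $q$ exactly.
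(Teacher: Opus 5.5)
There is a genuine gap. It sits where you say the configurations have ``linear part near identity'' and that in the conformal-diagonal case ``the linear part stays near the identity''.

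Once the Cantor sets are $C^{1+\alpha}$-perturbed, the generators acquire non-conformal derivatives. The relative configurations therefore live in all of $\Aff(d,\R)$, whose linear component ranges over the $d^2$-dimensional group $\GL(d,\R)$. The recurrent set must be open there, and every point of its closure must be sent \emph{into} it. This is why the set $\cW$ in Step~3 of the paper has linear parts ranging over $a\cU$ with $\cU$ open in $\GL(d,\R)$.

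For your grid sets with generators $x\mapsto x/N+v$, every renormalization $X\mapsto f_{\ua}^{-1}\circ X\circ f'_{\ua'}$ only multiplies the linear part of $X$ by a scalar. So for $d\geq 2$ it acts trivially on the $\SL(d,\R)$-component. A configuration on the boundary of your neighbourhood of $\id$ in those directions is never brought back inside. Choosing different rates for $K$ and $K'$ only affects the single scalar direction.

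Nor does the linear part stay near $\id$ after perturbation. Replace only the generators of $K$ by $x\mapsto N^{-1}Ex+v_i$, where $E$ acts as $\mathrm{diag}(1+\eta,(1+\eta)^{-1})$ on two coordinates and as the identity on the others. Then for every choice of words the relative linear part at depth $|\ua|=n$ is a scalar multiple of $E^{-n}$. Modulo scalars this leaves every compact set, and your covering of the difference cube by $A-A'$, which compares pieces of comparable shape, no longer applies. For the same reason, products of one-dimensional stably intersecting pairs do not by themselves give stable intersection in $\R^d$.

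A milder version of the issue affects the scale. ``Refining whichever set is larger'' keeps the log-scale bounded, but the criterion needs strict return into the interior. The translation covering would then also have to hold uniformly over a range of relative scales, which the equal-grid count $mm'>N^d$ does not provide.

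The missing ingredient is an \emph{active} correction of the linear part, and this is what the paper adds. It keeps equal contraction ratios and builds, in dimension one, a pair for which the translation covering of $\{a\}\times I$ is $c$-separable: each of $c$ disjoint subfamilies of renormalizations covers on its own. Taking products gives a $c^d$-separable covering of $\{a\}\times I^d$. It then uses Lemma~\ref{lem: generating a convering in SL with d^2 maps}: there are a neighbourhood $\cU$ of $\id$ in $\GL(d,\R)$ and matrices $A_1,\dots,A_{d^2+1}$ close to $\id$ with $\overline{\cU}\subset\bigcup_l \cU A_l^{-1}$.

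The generators of $K'$ in the $l$-th subfamily are perturbed so that those renormalizations act on linear parts by $X\mapsto XA_l^{-1}$. A configuration in $\overline{\cW}$ is then returned in two moves: first choose $l$ to bring its linear part into $\cU$, then choose an operator of that subfamily to bring its translation into $I^d$. This needs $c^d\geq d^2+1$, and openness of strong covering gives robustness.

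Exact dimensions are reached afterwards by adding auxiliary generators, using continuity of the affinity dimension and Falconer's theorem. Such final adjustments, like your continuous change of rates, are only legitimate once an open covering in all directions, including $\GL(d,\R)$, is already in place.
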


Several distinct methods have been developed to verify stable intersections since the late 1960s. In the case \( d = 1 \), the classical \emph{gap lemma}, introduced by Newhouse \cite{n_1}, provides an elegant sufficient condition known as the \emph{thickness test}, although its applicability is limited (see also \cite{BieblerGapLemma} for the one-dimensional complex setting). However, its recent generalization to higher dimensions \cite{Yav22}, even under additional geometric assumptions, does not yield stable intersections.
In cases where both Cantor sets have sufficiently large dimensions, namely $\lfloor \dimHD K \rfloor + \lfloor \dimHD K' \rfloor \geq d$ where $\lfloor x \rfloor$ denotes the integer part of $x$, a fundamentally different approach was carried out by Asaoka \cite{Asaoka}. In this regime, the stable geometric properties of blenders 
allow one to produce examples of two ``transversal'' blender-type Cantor sets with $\cC^1$-stable intersections.  
However, as illustrated in Figure \ref{fig:dimension diagram}, a significant gap remains
in the literature for sets with smaller individual dimensions.

\begin{figure}[ht]
    \centering
    \includegraphics[width=.6\linewidth]{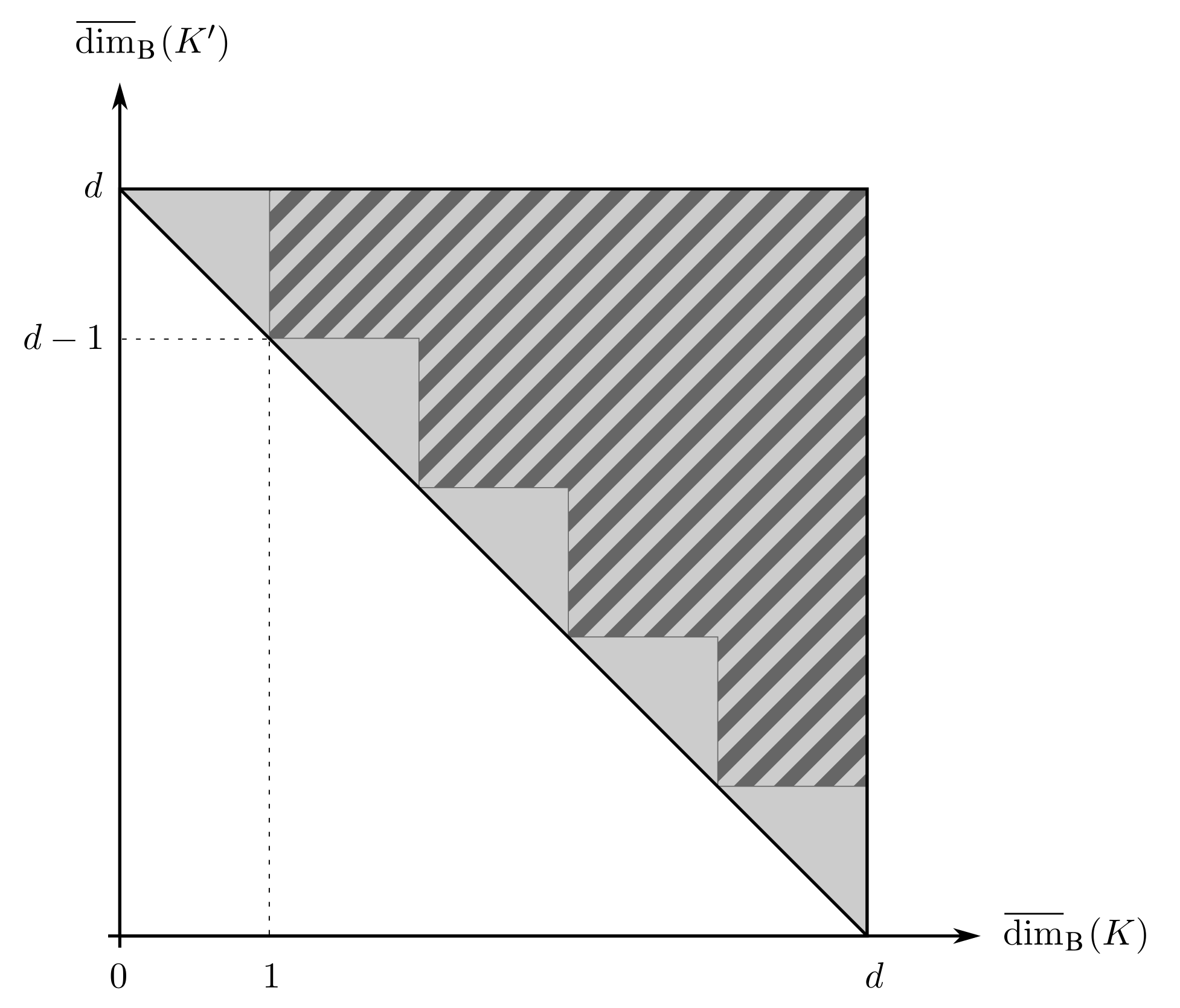}
    \caption{ Dimension pairs \((\dimUB(K), \dimUB(K'))\). Shaded region: pairs previously achieved by blender-type Cantor sets. Light gray region: new dimension pairs accessed via Theorems \ref{thm: main1} and \ref{thm: main2}.}
    \label{fig:dimension diagram}
\end{figure}

The proof of Theorem \ref{thm: main1} is based on the renormalization method. 
 It relies on the ``covering criterion'' introduced in the first paper of this series 
 \cite{NZ1}, where we generalize the ``recurrent compact set criterion'' of Moreira-Yoccoz \cite{MY} to higher dimensions.
A simplified version of the covering criterion for affine Cantor sets (or so-called self-affine sets) is stated in Theorem \ref{thm: covering for affines}. 

This allows us to develop a method to obtain ample classes of examples showing several distinct  geometrical features. 

\begin{theoremain}\label{thm: main2} 
The Cantor sets in Theorem \ref{thm: main1}  can be chosen to be affine, with arbitrarily small thickness, and nearly homothetic to an arbitrary element of $\sldr$.  
Furthermore, the construction can be adapted to the complex setting for holomorphic Cantor sets in $\C^d$. 
\end{theoremain}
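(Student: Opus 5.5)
The plan is to build both Cantor sets as affine (self-affine) Cantor sets and verify stable intersection through the covering criterion for affine Cantor sets, Theorem \ref{thm: covering for affines}. I take this criterion in the following form. Let $K$ be generated by contractions $x\mapsto A_i x+b_i$ and $K'$ by $x\mapsto A'_j x+b'_j$, and consider the space of relative affine configurations $(L,t)\in \GL(d,\R)\ltimes\R^d$ modulo the renormalization action. It suffices to find a compact set $\mathcal{L}$ of relative configurations with the following property: every $(L,t)$ in a neighborhood of $\mathcal{L}$ has a pair of pieces $(i,j)$ whose renormalization brings $(L,t)$ back into the interior of $\mathcal{L}$, uniformly.

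The construction proceeds in four steps.
\begin{enumerate}
\item Fix a matrix $A$ that is either nearly homothetic to a prescribed $B\in\sldr$, i.e.\ $A=\lambda B'$ with $B'$ close to $B$, or a small multiple of the identity. Use the same linear part for all pieces of $K$, namely $A_i=\lambda A_0$, and for $K'$ take $A'_j=\mu A_0$ with the same $A_0$ up to perturbation. The relative linear part of a renormalization, $A_0^{-n}\circ\cdots$, then stays in a bounded region of $\GL(d,\R)$, and the dynamics on configurations is essentially by translations and scalings.
\item Choose $N$ pieces for $K$ and $M$ pieces for $K'$ with $N\lambda^{p}\approx1$ and $M\mu^{q}\approx1$. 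For homothety-like maps this makes $\dimUB=p$ and $\dimUB=q$. For the general $\sldr$ case, use the affinity dimension formula and tune $\lambda,\mu$ and the singular values, using Falconer's theorem for generic translations or choosing the translations so that the upper box dimension equals the affinity dimension.
\item Construct $\mathcal{L}$. A relative configuration is essentially a translation $t$ in a bounded set $T$ together with a scale ratio $s$ in a compact interval $[s_0,s_1]$. Renormalizing by a piece of $K$ multiplies $s$ by $\lambda^{-1}$; renormalizing by a piece of $K'$ multiplies it by $\mu$. The condition $p+q>d$ gives $NM\lambda^d\mu^d\gg1$ in the sense of counting. The number of pairs $(i,j)$ exceeds the number of cells of size comparable to $\mathcal{L}$ needed to cover the translation region after rescaling, so the pieces can be placed so that the images of $\mathcal{L}$ under the inverse renormalizations cover a neighborhood of $\mathcal{L}$ with margin.
\item To get arbitrarily small thickness, insert large gaps by clustering the pieces while keeping the counts. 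The covering argument only uses a counting and placement of translations, so it is unaffected. For the holomorphic case, replace $\R^d$ by $\C^d$ and scalings by complex scalings $\lambda e^{i\theta}$. Rotations add a compact parameter to $\mathcal{L}$, which is handled the same way.
\end{enumerate}
Stability under $C^{1+\alpha}$ perturbations then follows from the criterion, since the covering is robust with margin.

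The main obstacle is step 3 when $p$ and $q$ are non-integer and small individually. A naive lattice placement requires $N\ge$ the number of $\lambda$-cells, which forces dimension $d$. The trick is to cover a $d$-dimensional family of translations using pairs: the $K$-pieces are spread along one "digit" scale and the $K'$-pieces along a complementary scale, like a digit decomposition $\Z^d=D+D'$ for a base $b$ with $|D||D'|\ge b^d$. Then $D+D'$ covers the base-$b$ grid, and the arithmetic sum of the two digit sets fills cubes. I would make this precise with $\lambda^{-1}=b^{k}$, $\mu^{-1}=b^{l}$, and digit sets with $|D|=N$, $|D'|=M$ satisfying $D+D'\supseteq$ a full residue system modulo a suitable lattice with overlaps. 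Upper box dimensions $p$ and $q$ are matched by choosing $k,l,N,M$ with $\log N/(k\log b)\to p$ and $\log M/(l\log b)\to q$, with exact values obtained by a final perturbation of $\lambda,\mu$. The overlap margin coming from $p+q>d$ gives the uniform interior condition.
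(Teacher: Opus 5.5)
\paragraph{Gap: the linear part of the configuration space.} For $d\ge 2$, the covering criterion (Theorem~\ref{thm: covering for affines}) needs a relatively compact set $\cW$ that is open in all of $\Aff(d,\R)$. That includes the $d^2$-dimensional linear factor $\GL(d,\R)$, because $\cC^{1+\alpha}$ (already affine) perturbations of the generators move relative linear parts in every direction. This is why the paper's $\cW$ has linear parts in an open set $\cU_0\subset\GL(d,\R)$ rather than in the scalar slice.

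Your Step~3 treats a configuration as ``a translation plus a scale ratio'', which is only a $(d+1)$-dimensional slice. Your Step~1 rules out doing better. If every piece of $K$ has linear part $\lambda A_0$ and every piece of $K'$ has $\mu A_0$, then every $R_{i,j'}$ acts on linear parts by the \emph{same} map $\phi(L)=\lambda^{-1}\mu\,A_0^{-1}LA_0$. This is a left translation composed with a right translation of the unimodular group $\GL(d,\R)$, so it preserves Haar measure. Projecting the covering condition to the linear factor gives $\phi(\overline{\mathcal V})\subset\mathcal V$, where $\mathcal V$ is the (nonempty, relatively compact, open) projection of $\cW$. Measure preservation then forces $\mathcal V=\phi(\overline{\mathcal V})$ to be both compact and open, which is impossible because the components of $\GL(d,\R)$ are non-compact. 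For $A_0=\id$ (your homothetic case) the non-scalar part of $L$ is even fixed by every renormalization, whatever the word lengths.

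Concretely, perturb $K'$ to linear parts $\mu E$ with $E=\mathrm{diag}(1+\eta,(1+\eta)^{-1},1,\dots,1)$. The relative linear part then evolves by $L\mapsto\lambda^{-1}\mu LE$ and leaves every compact set modulo scalars. Your translation covering was designed for a fixed linear part, so it gives no control here. Your claim that the relative linear part ``stays in a bounded region'' also fails on a neighborhood as soon as $A_0$ has eigenvalues of different moduli. The ``perturbation'' of $A_0$ that you leave unspecified is exactly where the difficulty lies.

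\paragraph{The missing ingredient.} What you need is the covering lemma in the linear group (Lemma~\ref{lem: generating a convering in SL with d^2 maps}, Proposition~\ref{prop: stronger quantitative covering}). It provides a neighborhood $\cU_0$ of $\id$ and finitely many $F_1,\dots,F_l$ near $\id$, with $l\le \mathrm{e}^{d^2+2}\lceil\kappa\rceil^{d^2}$, such that $\overline{\cU_0}\subseteq\bigcup_j A\,\cU_0\,(AF_j)^{-1}$ whenever $\|A\|\,\|A^{-1}\|\le\kappa$.

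To couple this with the translations, a covering with margin (what your digit sets $D+D'$ give) is not enough. You need a $c^d$-separable covering with $c^d>l$: the renormalization operators must split into subfamilies $\cH_1,\dots,\cH_{c^d}$, each covering $I^d$ on its own. The generators of $K'$ belonging to $\cH_j$ are then perturbed to have linear part $\lambda AF_j$. For $(L,t)\in\overline{\cW}$, one first picks $j$ to bring $L$ back into the linear part of $\cW$, then a pair in $\cH_j$ to bring $t$ back into $I^d$, using stability of the strong covering. This is why $K$ is exactly $(A,\lambda)$-homogeneous while $K'$ is only nearly so.

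The excess $p+q-d$ therefore has to be spent on the multiplicity $c$ of the covering (compare $\ell nn'\in[2c,c(\beta-1)/2]$ in Lemma~\ref{lem: homogeneous cantors on [0,1]}), with both sets sharing the contraction ratio. Your count $NM\lambda^d\mu^d\gg1$ is wrong as written: with $N\lambda^p\approx M\mu^q\approx1$ it equals $\lambda^{d-p}\mu^{d-q}<1$.

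\paragraph{Two further points needing the same machinery.} First, for arbitrary $B\in\sldr$ the paper does not take $A_0\approx B$ directly. Your base-$b$ digit sets are adapted only to scalar expansion, and $\kappa(B)$ may be large. Instead the paper works with $A$ near $\id$, where Proposition~\ref{prop: stronger quantitative covering} applies with $\kappa$ close to $1$ and the product covering is stable. It then passes to iterates $A^n$, which are dense among matrices with non-negative real eigenvalues, replacing $\id$ by $D=\mathrm{diag}(\pm1)$ when there are negative eigenvalues. Second, in the holomorphic case the extra parameter is all of $\GL(d,\C)$, not a compact circle of rotations; it is handled by the same proposition with $G=\GL(d,\C)$.
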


Here, a regular Cantor $K$ set is (\emph{nearly homothetic}, resp.) \emph{homothetic} to a matrix $L\in \sldr$ if the  derivative of its generating map at every point $p\in K$ is equal to (close to, resp.)  $\lambda(p)\cdot  L$ for some $\lambda(p)>0$. 

Theorem \ref{thm: main2} demonstrates that the covering criterion can be applied to pairs of Cantor sets generated by expanding maps that share a common invariant (strong) unstable cone field. This, in particular, shows how the criterion operates for the class of Cantor sets that are far from being quasi-conformal; 
{that is, it remains effective even in the presence of strong affine distortion and anisotropy.}

The techniques developed herein suggest that the covering criterion is sufficiently powerful to address the stable intersection problem in the critical case where $K=K'$. We formulate this in the following conjecture.

\begin{conjecture}\label{conj:Falconer-regularCantor}
    Every regular Cantor set 
    \(K \subset \mathbb{R}^d\) with \(\dimHD(K) > d/2\) can be 
    $C^\infty$-approximated by a regular Cantor set $\tilde K$ such that the pair \((\tilde{K}, \tilde{K})\) has $C^r$-stable intersection for some $r>1$.  
\end{conjecture}

This conjecture can be viewed as a ``stable'' version of Falconer’s distance set conjecture \cite{FalconerConj} adapted to regular Cantor sets.
We remark that Falconer’s conjecture is still open even for $d=2$, for which the best-known dimension threshold is $5/4$,  obtained in \cite{GIOW20}. In forthcoming work, we intend to address the Conjecture \ref{conj:Falconer-regularCantor} for certain classes of bunched regular Cantor sets.

\subsection*{Acknowledgment} 
The authors are grateful for fruitful conversations with C. G. Moreira and M. Pourbarat. 
This work was completed while the authors were visiting BICMR,  Peking University.

\section{Preliminaries}\label{sec: pre}

We follow the notation and terminology of \cite{NZ1}. For the reader’s convenience, we briefly recall the relevant definitions and conventions.

\begin{itemize}
\item We denote the Hausdorff dimension of the set $A\subset \R^d$ as $\dimHD(A)$.
\item 
We denote the upper box dimension of the set $A \subset \mathbb{R}^d$ as$$\dimUB(A) := \limsup_{\varepsilon \to 0} \frac{\log N(A, \varepsilon)}{\log(1/\varepsilon)},$$
where $N(A, \varepsilon)$ is the minimum number of sets of diameter $\varepsilon$ required to cover $A$.

\item 
Given a metric space $(X,d)$, for any $x\in X$ we denote the $\delta$-neighborhood of $x$ in $X$ as $B_{\delta}(x)$. For $V\subset X$ we define 
$B_{\delta}(V):=\bigcup_{v\in V}B_{\delta}(v)$ and
$V_{(\delta)}:=\{v\in V:\; B_{\delta}(v)\subset V \}$.

\item 
Denote the space of invertible matrices over $\bR^d$ by $\GL(d,\bR)$, those with determinant equal to $1$ as $\SL(d,\bR)$ and orthonormal matrices by $\operatorname{SO}(d)$. For $A\in \GL(d,\bR)$ we denote its norm by $\|A\|_{\it op}:= \sup_{|v|=1} |Av|$ and its co-norm by $m(A):=\inf_{|v|=1} |Av|$.
\item We denote the identity matrix in $\GL(d,\bR)$ as $\id$. We also use the notation $\id$ for the identity function. Each case will be clear in the context.
\item 
We denote  by $\Aff(d,\bF)$ the space of invertible affine transformations $[x\mapsto Ax+a]$ of $\bF^d$ equipped with $\cC^{1}$ topology,  where $A\in \GL(d,\bF)$ and $a\in \bR^d$. We denote the subspace $\Aff_{\id}(d,\R)\subset \Aff(d,\R)$ as the space of affine transformations of the form 
$[x\mapsto \lambda \cdot x+ v]$ where $\lambda\in \R\setminus \{0\}$ and $v\in \R^d$.
\end{itemize}

\begin{definition}[$\cC^{1+\alpha}$ neighborhood of a regular Cantor set]
    Given a regular Cantor set $K$ generated by $g$ with symbolic type $\Sigma$, we denote $U^{1+\alpha}_{K,\delta}$ as a $\delta$-neighborhood in $\cC^{1+\alpha}$ topology of $K$ in the space of regular Cantor sets consists of Cantor sets $\tilde K$ with symbolic type $\Sigma$ generated by some $\cC^{1+\alpha}$ map $\tilde g$ with pieces $\tilde G(a)$ with the property that $g'|_{G'^*(a)}$ is $\delta$-close to $g|_{G^*(a)}$  in $\cC^{1+\alpha}$ topology, for all $a\in \mathfrak{A}$.
\end{definition}

\begin{definition}[Stable intersection] Given a pair of regular Cantor sets $K,K'\subset \R^d$, we say that $K,K'$ are $\cC^{1+\alpha}$-stably intersecting if there exists $\varepsilon>0$ such that for any $\tilde K\in U^{1+\alpha}_{K,\varepsilon}$ and $\tilde K'\in U^{1+\alpha}_{K',\varepsilon}$ one has $\tilde K\cap \tilde K'\neq \emptyset$.
\end{definition}

\begin{definition}[Bunched Cantor sets] \label{def bunched}
Given a regular Cantor set $K$ in $\R^d$
 described by $(\Sigma_{\mathfrak{B}},g)$ where $g$ is a $\cC^{1+\alpha}$ map with a uniform expansion rate bigger than $\mu^{-1}>1$, we say that $K$ is
\begin{itemize}
\item \emph{affine} if for any $a\in \mathfrak{A}$, $g|_{G(a)}: G(a) \to \R^d$ is an expanding affine map.
    \item \emph{conformal} if $Dg(x)/\|Dg(x)\|_{\it op}\in \operatorname{SO}(d)$ for all $x\in K$,
    \item  \emph{bunched} whenever $g$ is bunched at the Cantor set $K$, that is, there is $N_g\in \N$ such that for all $x \in K$,
    $Dg(x) \in \GL(d,\R)$ and
\begin{equation}\label{eq: definition of bunched cantor sets}
    \|Dg^{N_g}(x)\|_{\textit{op}}\cdot 
   m\left(Dg^{N_g}(x)\right)^{-1} \cdot \mu^{\alpha \cdot N_g } <1.  
\end{equation}
\item \emph{homogeneous} if it is affine and there are $\lambda\in (0,1)$ and $A\in \SL(d,\R)$ such that for any $a\in \mathfrak{A}$, $(g|_{G(a)})^{-1}:g(G(a))\to G(a)$ is the affine map $x\mapsto \lambda\cdot Ax+v_{a}$ for some $v_{a}\in \R^d$. Whenever $(A,\lambda)$ are prefixed, the Cantor set $K$ is called \emph{$(A,\lambda)$-homogeneous}.
\end{itemize}
\end{definition}

Conformal Cantor sets are special cases of bunched Cantor sets. Moreover, the bunching condition is an open condition.
Therefore, small perturbations of conformal Cantor sets are bunched Cantor sets.

Affine Cantor sets also could be described as the unique attractor of a family of affine maps. In particular, given a family of uniformly contracting affine maps $f_1,f_2,\cdots,f_n \in \Aff(d,\R)$ together with an open compact set $U\subset \R^d$ such that 
\begin{itemize}
    \item for any $1\leq i<j\leq n$, $f_i(U)\cap f_j(U)= \emptyset$, and
    \item for any $1\leq i\leq n$, $f_i(U)\subset U$,
\end{itemize}
the unique compact set $K\subset U$ such that
$$K= \bigsqcup_{i=1}^n f_i(K)$$
is then an affine Cantor set with full shift symbolic type $\Sigma$.

\section{Covering conditions}

Covering conditions play a central role in the study of stable intersections of
regular Cantor sets. In particular, the existence of intersections can often
be reduced to verifying certain covering properties. 

In this section we review the notion of covering conditions 
and provide a key lemma for the coverings in linear groups.

\begin{definition}[Covering conditions]\label{def: Covering}
    Let $\cF$ be a family of continuous maps on a metric space $(X,d)$.
    \begin{itemize}
        \item We say that a set $V\subset X$ satisfies the \emph{covering condition} with respect to $\cF$ if
    \begin{equation}\label{eq covering condition}
        \overline{V} \subset \bigcup_{f\in \cF} f^{-1}(V).
    \end{equation}
        \item  We say that a set $V\subset X$ satisfies the {\emph{strong covering}} condition with respect to $\cF$ if there exists $\delta>0$ such that 
    \begin{equation}\label{eq: covering condition delta interior}
         B_{\delta}(V) \subset \bigcup_{f\in \cF} f^{-1}(V_{(\delta)}).
    \end{equation}
    \item We say $V\subset X$ satisfies {$n$-\emph{separable covering condition}} with respect to $\cF$ if one can partition $\cF=\bigsqcup_{i=1}^n \cF_i$ such that for any $1\leq i\leq n$, $V$ satisfies covering condition with respect to the family $\cF_i$.
    \end{itemize}
    \end{definition}

$n$-Separable coverings provide multiple ways to return a given point
$p \in \overline{V}$ into
$V$, a feature that is crucial for
applications. 

Given a finite family $\mathcal{F}=\{f_1,\ldots,  f_k\}$ of continuous maps, we say that the family $\tilde{\cF}$ is $\epsilon$-close to $\cF$ in the $\cC^0$ topology, if $\tilde{\cF}=\{\tilde{f}_1,\ldots,\tilde{f}_k\}$ is such that  $f_i,\tilde{f}_i$ are $\epsilon$-close in the $\cC^0$ topology for $i=1,\ldots,k$. 

For a finite family $\cF$ of continuous maps on a locally compact metric space $(X,d)$, the covering condition implies the strong covering condition for an open relatively compact set $V\subset X$. Moreover, the strong covering condition is stable with respect to the family $\cF$:
 if a set $V\subset X$ satisfies the strong covering condition with respect to a family of continuous maps $\cF$, it remains satisfied for any family $\tilde{\cF}$ in a $\cC^0$ neighborhood of $\cF$ (see \cite[\S 3]{NZ1}).

\subsection{Covering in Linear Groups}
In this section, we study covering properties arising from the group structure
of the Lie groups. By considering left and right actions of group elements, we formulate covering
conditions that will be used later in the renormalization arguments.

Let $G$ be a topological group.
For any pair $(a,b)\in G\times G$, define the continuous map
\[
f_{a,b} : G \to G, \qquad f_{a,b}(g) = a^{-1} g b.
\]
Given a subset $\cF \subset G \times G$ and an open set $\cU \subset G$, we say
that $\cU$ satisfies the \emph{covering condition with respect to $\cF$} if
\begin{equation}\label{eq covering condition for topological groups}
\overline{\cU} \subset \bigcup_{(a,b)\in \cF} f_{a,b}^{-1}(\cU),
\end{equation}
where
\[
f_{a,b}^{-1}(\cU) = \{\, f_{a,b}^{-1}(u) : u \in \cU \,\}.
\]

Since $G$ is locally compact, the strong covering condition and the covering condition are equivalent in this space; hence, we use both terms interchangeably, keeping in mind their equivalence.

In this section, we assume that $G$ is either $\GL(d,\mathbb{R})$ or $\GL(d,\C)$, and that $G$ is endowed with the usual operator norm $\|\cdot\|$. 

Here we prove our key covering lemma, which can be viewed as a generalization of 
\cite[Lemma~3.8]{FNR-AiM}.

\begin{lemma}[Quantitative covering near $\id$]\label{lem: generating a convering in SL with d^2 maps} Let $a\in G$ be an arbitrary element and $\cU_0$ be a neighborhood of $\id$ in $G$. There exists an open set $\cU\subseteq \cU_0$ and a finite set $\cF \subset \cU_0$ such that $\cU$ satisfies strong covering condition with respect to both $\cF a\times \{a\}$ and $\{a\} \times \cF a$ where $\cF a =\{xa: x\in \cF\}$. Moreover, letting $\kappa(a) := \|a\|\,\|a^{-1}\|$, the cardinality of $\cF$ admits
the following bounds.
If $\kappa(a) = 1$, then $|\cF| \le \dim(G) + 1$ and
in the general case
\[
|\cF|
\le \mathrm{e}^{\dim(G) +2}\, \lceil \kappa(a) \rceil^{\dim(G)}.
\]
\end{lemma}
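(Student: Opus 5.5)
The plan is to reduce both covering conditions to covering a small Euclidean ball in the Lie algebra $\mathfrak g$ of $G$ by slightly shifted copies of another ball, working in logarithmic coordinates. First I unwind the definitions. For the pair $(xa,a)$ we have $f_{xa,a}(g)=a^{-1}x^{-1}ga$, so the covering condition for $\cF a\times\{a\}$ reads
\[
\overline{\cU}\subset\bigcup_{x\in\cF}x\,(a\cU a^{-1}).
\]
For the pair $(a,xa)$ we have $f_{a,xa}(g)=a^{-1}gxa$, so the condition for $\{a\}\times\cF a$ reads
\[
\overline{\cU}\subset\bigcup_{x\in\cF}(a\cU a^{-1})\,x^{-1}.
\]
Thus, with $\cW:=a\cU a^{-1}$, I need a single finite set $\cF\subset\cU_0$ such that both the left translates $x\cW$ and the right translates $\cW x^{-1}$ cover $\overline{\cU}$. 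Since $G$ is locally compact and $\overline{\cU}$ is compact, strong covering follows from covering for open relatively compact $\cU$, as recalled before the lemma.

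I equip $\mathfrak g$ with the Hilbert--Schmidt inner product $\langle u,v\rangle=\mathrm{Re}\,\mathrm{tr}(u^*v)$ and set $\cU=\exp(B_r)$, with $r>0$ small enough that $\exp$ is a diffeomorphism there and $\cU\subset\cU_0$. Conjugation by $a$ acts on $\mathfrak g$ by $\mathrm{Ad}(a)u=aua^{-1}$, whose Hilbert--Schmidt operator norm and co-norm are at most $\kappa(a)$ and at least $\kappa(a)^{-1}$. Hence
\[
\cW=\exp(\mathrm{Ad}(a)B_r)\supset\exp(B_{r/\kappa(a)}).
\]
I take every $x\in\cF$ of the form $x=\exp(v)$ with $|v|\le r$. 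By Baker--Campbell--Hausdorff,
\[
\log(x^{-1}u)=\log u-v+E,\qquad \log(u\,x)=\log u+v+E',
\]
where $|E|,|E'|\le C|v|\,|\log u|$ with $C$ depending only on $G$. So, up to an error of relative size $O(r)$, both conditions become the single Euclidean statement that $\overline{B_r}\subset\bigcup_i (v_i+\mathrm{Ad}(a)B_r)$, with the same vectors $v_i$ serving for left and right translates (replacing $v_i$ by $-v_i$ in one case, which is harmless if $\{v_i\}$ is chosen symmetric, or by handling the two inequalities separately with the same estimate).

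\emph{Case $\kappa(a)=1$.} Then $a$ is a scalar multiple of a unitary or orthogonal matrix, so $\mathrm{Ad}(a)$ is a Hilbert--Schmidt isometry and $\cW=\cU$. Let $n=\dim G$ and take $v_0,\dots,v_n$ to be the vertices of a regular simplex centred at $0$ with $|v_i|=\varepsilon$. There is $c=c(n)>0$ such that every $u$ satisfies $\langle u,v_i\rangle\ge c\,|u|\,\varepsilon$ for some $i$. For $|u|\le r$ and this $i$,
\[
|u-v_i+E|^2\le r^2-2c\,r\varepsilon+\varepsilon^2+O(\varepsilon r^2),
\]
and this is $<r^2$ once $\varepsilon\ll cr$ and $r\ll c$. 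The point of taking $r\ll c$ is that the BCH bracket term $\tfrac12[v,u]$ contributes only $O(\varepsilon r^2)$ to the squared norm, which is dominated by the gain $2c\,r\varepsilon$. The same estimate with $+v_i$ handles right translates, because the simplex condition also holds for $-v_i$ after relabelling, or one simply uses $\langle u,-v_j\rangle$ directly. This gives $\cF=\{\exp(v_i)\}$ with $|\cF|=n+1=\dim G+1$.

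\emph{General case.} Here I use a net instead of a simplex. Choose a maximal $s$-separated set $\{v_i\}\subset\overline{B_r}$ with $s=r/(4\lceil\kappa(a)\rceil)$. Then every $u\in\overline{B_r}$ lies within $s$ of some $v_i$. Since the BCH error is $O(r\cdot r)$, taking $r$ small relative to $1/\kappa(a)$ makes it less than $s$. Hence $\log(x_i^{-1}u)$ and $\log(ux_i)$ lie in $B_{2s}\subset B_{r/\kappa(a)}\subset\mathrm{Ad}(a)B_r$. The volumetric bound gives
\[
|\cF|\le\bigl(1+2r/s\bigr)^n\le(9\lceil\kappa\rceil)^n.
\]
To reach the stated constant $\mathrm e^{n+2}\lceil\kappa\rceil^n$, I would instead use a near-optimal covering density for Euclidean balls, for instance Rogers' bound, or apply the simplex trick at scale $r/\kappa$. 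Some such sharper argument is needed, because the plain factor $9^n$ is too large.

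I expect the main obstacle to be the bookkeeping of the nonlinear BCH errors against the margin in the covering: in the $\kappa(a)=1$ case the gain is only linear in $\varepsilon$, and matching the exact constant $\mathrm e^{\dim G+2}$ requires a careful covering-number estimate rather than the crude volumetric one.
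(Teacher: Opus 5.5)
Your reduction is correct. Unwinding $f_{xa,a}$ and $f_{a,xa}$ gives exactly the two conditions $\overline{\mathcal U}\subset\bigcup_{x}x\,a\mathcal U a^{-1}$ and $\overline{\mathcal U}\subset\bigcup_x a\mathcal U a^{-1}x^{-1}$. The Hilbert--Schmidt ball satisfies $\mathrm{Ad}(a)B_r\supseteq B_{r/\kappa(a)}$, with equality $\mathrm{Ad}(a)B_r=B_r$ when $\kappa(a)=1$. Your regular-simplex argument, where the linear gain $2cr\varepsilon$ absorbs the BCH bracket, does give $|\mathcal F|=\dim G+1$ when $\kappa(a)=1$. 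The same vectors serve both conditions if you apply the simplex property to $-\log u$.

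The gap is the general-case cardinality bound, which is part of the statement. With $s=r/(4\lceil\kappa\rceil)$ your net can have up to $(1+8\lceil\kappa\rceil)^{n}$ points, where $n=\dim G$. Since $(1+8k)/k>8$ and $8^n>\mathrm e^{n+2}$ for $n\ge 2$, this exceeds $\mathrm e^{n+2}k^{n}$ as soon as $n\ge 2$. You name remedies (Rogers' covering density, a simplex trick) but do not carry them out, so as written the ``Moreover'' clause is not proved.

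The gap closes inside your own framework, because the factor $4$ is wasteful: all you need is $s+Cr^2<r/\kappa$. Take $s=\tfrac{10}{11}\,r/\kappa$ and $r<1/(11C\kappa)$. Every $U\in\overline{B_r}$ has a net point within $s$ of $U$ (left condition) and one within $s$ of $-U$ (right condition). So both $\log(x^{-1}u)$ and $\log(ux)$ land in $B_{r/\kappa}\subseteq\mathrm{Ad}(a)B_r$. The volumetric count is then $(1+2r/s)^n=(1+\tfrac{11}{5}\kappa)^n$. Since $\kappa>1$ forces $k=\lceil\kappa\rceil\ge 2$, one has $1+\tfrac{11}{5}k\le \mathrm e k$, hence $|\mathcal F|\le(\mathrm e k)^n\le\mathrm e^{n+2}k^n$.

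The paper instead gets its constant from a stars-and-bars count. It covers $k\overline\Delta$, where $\Delta$ is a simplex with unit vertices around $0$, by the translates $\Delta+u$ for $u\in M$, with $|M|=\binom{(N+1)k-1}{N-1}<\mathrm e^{N+1}k^{N-1}$ and $N=\dim G+1$. It then transfers this to $G$ by BCH.

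Switching to that simplex route would not be a drop-in fix for you. A norm bound on $\mathrm{Ad}(a)$ only places $\mathrm{Ad}(a)\overline{r\Delta}$ inside the ball of radius $\kappa r$. Since $\overline{r\Delta}$ has inradius at most $r/\dim G$, that ball lies in $k\overline{r\Delta}$ only if $k\ge\kappa\dim G$. Even when $\kappa(a)=1$, a rotation need not preserve $\Delta$. This is precisely the step where the paper passes from $\|axa^{-1}\|\le\kappa(a)\|x\|$ to $a\overline{r\Delta}a^{-1}\subseteq k\overline{r\Delta}$; your $\mathrm{Ad}$-invariant ball avoids the issue.
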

  \begin{proof}[Proof of Lemma \ref{lem: generating a convering in SL with d^2 maps}]
  Let $N:= \dim(G)+1$ and $k:= \lceil \kappa(a)\rceil$.
    Suppose $v_1,\ldots, v_{N}$ are $N$ points in $\mathfrak{G}$ with $|v_i|=1$ such the origin in contained in the interior of their convex hull, which we call $\Delta$. The scale of $\overline{\Delta}$ by the factor $k$ is
    $$k\overline{\Delta}=\left\{\sum_{i=1}^N \ell_i v_i: \sum_{i=1}^N \ell_i=k,\; \ell_1,\cdots,\ell_N\geq 0\right\}.$$
    Denote $\frac{\N^*}{t}:=\{\frac{x}{t}:x\in \N\cup \{0\}\}$ as the set of non-negative rational numbers with denominator $t$. Let
    $$M:=\left\{\sum_{i=1}^N \ell_i v_i: \sum_{i=1}^N \ell_i=k-\dfrac{N}{N+1},\; \ell_i\in
    \dfrac{\N^*}{N+1}\right\}.$$
    We have that $|M|<\max\{{\rm e}^{N+1}\cdot k^{N-1},N\}$ since $|M|=N$ for $k=1$ and for $k>1$,
    $$|M|=\binom{(N+1)k-1}{N-1}= \prod_{j=1}^{N-1} \dfrac{(N+1)k-j}{j} <\dfrac{(N+1)^{N-1}}{(N-1)!}\cdot k^{N-1}<{\rm e}^{N+1}\cdot  k^{N-1}.$$
    We claim that 
    $$k\overline{\Delta}\subseteq \bigcup_{u\in M} {\Delta}+u$$
    where 
    ${\Delta}+u:= \{x+u:x\in{\Delta}\}$. 
    Clearly, given $x=\sum_{i=1}^N \ell_i v_i\in k\overline{\Delta}$, one can find $\tilde{\ell}_i\in \frac{\N^*}{N+1}$ such that $\tilde{\ell}_i\leq \ell_i$ and $\tilde{x}:=\sum \tilde{\ell}_iv_i\in M$. Therefore,
\[
x - \tilde{x}
= \sum_{i=1}^N (\beta_i + \dfrac{1}{N+1} r_i)\, v_i \in {\Delta},
\]
where $\beta_i := \ell_i - \tilde{\ell}_i \geq 0$
 and $r_1,\dots,r_N > 0$ satisfy
\[
\sum_{i=1}^N r_i = 1
\quad \text{and} \quad
\sum_{i=1}^N r_i v_i = 0,
\]
which is possible since $0 \in \Delta$. Since $k\overline{\Delta}$ is compact and $\Delta+u$'s are all open, one can find sufficiently small $c>0$ (in particular $c=O((N+1)^{-1})$) such that $k\overline{\Delta}\subseteq \bigcup_{v\in M}(\Delta+ u)_{(c)}$ (recall that $X_{(\delta)}$ is the $\delta$-interior of $X$). As the whole construction is invariant under homothety, for any  $r>0$,
    \begin{equation}\label{eq:Linear_Convering_of_Simlex_Delta}
    kr\overline{\Delta}=k\overline{r\Delta}\subseteq \bigcup_{u\in M} (r\Delta+r u)_{(cr)}.
    \end{equation}
    
    For any $u\in M$, and any sufficiently small  $r>0$, we define $f^{(r)}_u,g^{(r)}_u:r\Delta\to \mathfrak{G}$ as the following 
    $$f_u^{(r)}(x)=\matrixexp^{-1}\big(\matrixexp(ru)\matrixexp(x)\big),\;\; g_u^{(r)}(x)=\matrixexp^{-1}\big(\matrixexp(x)\matrixexp(ru)\big)$$
    By the Baker-Campbell-Hausdorff formula for the Lie groups (see for instance \cite{Rossmann}), there exists a constant $C=C_{\mathfrak{G}}>0$ such that for a given $u,x\in \mathfrak{G}$ with $\|ru\|,\|x\|<1$, 
    \[\|\matrixexp^{-1}\big(\matrixexp(ru)\matrixexp(x)\big)-(x+ru)\|\leq C\cdot \|x\|\cdot \|ru\|,\]
    \[
    \|\matrixexp^{-1}\big(\matrixexp(x)\matrixexp(ru)\big)-(x+ru)\|\leq C\cdot \|x\|\cdot \|ru\|.
    \]
    Thus, whenever
\[
r < \min_{u \in M} \left\{ \frac{c}{C\,\|u\|} \right\},
\]
we have, for every $u \in M$,
\[
\sup_{x \in \overline{r\Delta}}
\bigl\| f_u^{(r)}(x) - (x + r u) \bigr\|
\leq \sup_{x \in \overline{r\Delta}} C\cdot \|x\|\cdot \|ru\|
< c r.
\]
    Consequently, 
    $(r\Delta+ru)_{(cr)}\subseteq f_u^{(r)}(r\Delta)$ for any $u\in M$. Therefore, (\ref{eq:Linear_Convering_of_Simlex_Delta}) implies that
    $$ k\overline{r\Delta}\subseteq \bigcup\limits_{u\in M} f^{(r)}_u(r\Delta).$$
    Similarly, one can deduce that
    $$ k\overline{r\Delta}\subseteq \bigcup\limits_{u\in M} g^{(r)}_u(r\Delta).$$
    On the other hand, given any $x\in r\Delta \subset\mathfrak{G}$, $$a\matrixexp(x)a^{-1}=\matrixexp(axa^{-1}),\; \|axa^{-1}\|\leq \kappa(a)\|x\|.$$ 
    Therefore,
    $$\matrixexp^{-1}(a\matrixexp(r\Delta)a^{-1})= 
    a\overline{r\Delta}a^{-1}\subseteq k\overline{r\Delta}\subseteq \bigcup\limits_{u\in M} f^{(r)}_u(r\Delta). $$
    So, as the exponential map is a diffeomorphism on $f^{(r)}_u(r\Delta)$ and on $k\overline{r\Delta}$, 
    \begin{equation}\label{eq:covering_equation}
    a\matrixexp(r\Delta)a^{-1}\subseteq 
    \bigcup\limits_{u\in M}\Big(\matrixexp\big(f_u^{(r)}(r\Delta)\big)\Big) = 
    \bigcup\limits_{u\in M} \matrixexp(ru)\matrixexp(r\Delta) .
    \end{equation}
    Similarly we have 
    $$a\matrixexp(r\Delta)a^{-1}\subseteq \bigcup\limits_{u\in M} \matrixexp(r\Delta) \matrixexp(ru). $$
    Thus, the conclusion of Lemma \ref{lem: generating a convering in SL with d^2 maps}  follows from (\ref{eq:covering_equation}) by taking
    $$\cU:= \matrixexp(r\Delta),\; \cF:=\{\matrixexp(-ru): u\in M\}$$ and $r$ small enough such that $\cU,\cF\subset \cU_0$ since $|\cF|=|M|<{\rm e}^{N+1}\cdot k^{N-1}$. 
    \end{proof}

It follows from the proof of Lemma~\ref{lem: generating a convering in SL with d^2 maps}
that the following stronger conclusion even holds.

\begin{proposition}\label{prop: stronger quantitative covering}
   Let $\kappa > 1$ and let $\cU_0$ be a neighborhood of\/ $\id$ in $G$.
Then there exist a neighborhood $\cU \subset \cU_0$ of $\id$, a finite set
$\cF \subset \cU_0$, with 
\[
|\cF|
\le \mathrm{e}^{\dim(G)+2}\, \lceil \kappa \rceil^{\dim(G)}
\]
and a region
\(
W := \bigl\{ a \in G : \|a\|\,\|a^{-1}\| \le \kappa \bigr\}
\)
such that for every $a \in W$
\[
a\,\overline{\cU}\,a^{-1}
\subseteq \left(\bigcup_{f \in \cF} f^{-1}\cU\right) \cap \left(\bigcup_{f \in \cF} \cU f^{-1}\right) .
\]
\end{proposition}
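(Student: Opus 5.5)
The plan is to rerun the proof of Lemma~\ref{lem: generating a convering in SL with d^2 maps} with $k:=\lceil\kappa\rceil$ fixed in advance, independent of any particular $a$. The key observation is that the only place $a$ entered that argument was through the inclusion $a\,\overline{r\Delta}\,a^{-1}\subseteq k\overline{r\Delta}$. The construction of $\Delta$, $M$, $c$ and the threshold on $r$ used only $k$, $N=\dim(G)+1$ and the Baker--Campbell--Hausdorff constant $C$. So we fix unit vectors $v_1,\dots,v_N$ in the Lie algebra $\mathfrak{G}$ whose convex hull $\Delta$ contains $0$ in its interior. We then define $M$ with this $k$, pick $c>0$ as in the lemma, and choose
\[
r<\min_{u\in M}\frac{c}{C\|u\|}
\]
small enough that $\matrixexp(r\Delta)\subset\cU_0$ and $\matrixexp(-rM)\subset\cU_0$. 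We set $\cU:=\matrixexp(r\Delta)$, a neighborhood of $\id$ since $0\in\mathrm{int}\,\Delta$, and $\cF:=\{\matrixexp(-ru):u\in M\}$. The cardinality bound $|\cF|=|M|<\mathrm{e}^{N+1}k^{N-1}$ is exactly the count already carried out in the lemma.

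Next we verify the inclusion uniformly in $a\in W$. The lemma's argument gives
\[
k\overline{r\Delta}\subseteq\bigcup_{u\in M}f^{(r)}_u(r\Delta)
\quad\text{and}\quad
k\overline{r\Delta}\subseteq\bigcup_{u\in M}g^{(r)}_u(r\Delta),
\]
and this step involves no $a$ at all. For $a\in W$ and $x\in\overline{r\Delta}$ we have $a\,\matrixexp(x)\,a^{-1}=\matrixexp(axa^{-1})$. The point to check carefully is that $axa^{-1}$ lies in $k\overline{r\Delta}$, not merely in a ball of radius $k\|x\|$. I would handle this as the lemma implicitly does: replace $\Delta$ by a simplex that is comparable to a ball, or, if needed, fix the norm on $\mathfrak{G}$ as the gauge (Minkowski functional) of $\Delta$ up to a harmless constant. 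Failing that, I would absorb the constant into $k$, which only changes the constant in the bound; the statement inherits whatever the lemma's own convention provides. Applying $\matrixexp$, which is a diffeomorphism on these small sets, we get
\[
a\,\overline{\cU}\,a^{-1}\subseteq\bigcup_{u\in M}\matrixexp(ru)\,\cU=\bigcup_{f\in\cF}f^{-1}\cU,
\]
and symmetrically $a\,\overline{\cU}\,a^{-1}\subseteq\bigcup_{f\in\cF}\cU f^{-1}$. Intersecting the two inclusions gives the claim.

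The main obstacle is only this uniformity. Every constant ($c$, $r$, $M$) must depend on $\kappa$ alone, never on $a$, and the conjugation estimate $\|axa^{-1}\|\le\kappa(a)\|x\|\le\kappa\|x\|$ must be converted into membership in the scaled simplex. Once that conversion is fixed at the level of the lemma's setup, the rest is a verbatim repetition of its proof.
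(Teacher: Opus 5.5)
You take exactly the paper's route. The paper's only justification is that the proposition ``follows from the proof'' of Lemma~\ref{lem: generating a convering in SL with d^2 maps}, with $k=\lceil\kappa\rceil$ fixed and $\Delta,M,c,r$ independent of $a$. Your cardinality count, the BCH step and the final intersection are all fine. But the step you flag is a genuine gap, present in the paper's lemma as well, and none of your remedies closes it with the stated constant.

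The bound $\|axa^{-1}\|\le\kappa\|x\|$ only puts $a\,\overline{r\Delta}\,a^{-1}$ inside the norm ball $\overline{B_{\kappa r}}$, and a simplex is far from a ball. Suppose $B_\rho\subseteq\Delta\subseteq\overline{B_1}$. Then $-\Delta\subseteq\overline{B_1}\subseteq\rho^{-1}\overline{\Delta}$. A simplex containing $0$ in its interior in dimension $n$ satisfies $-\Delta\subseteq t\overline\Delta$ only for $t\ge n$, so $\rho\le 1/\dim(G)$. Your three remedies each run into this factor:
\begin{itemize}
\item Making $\Delta$ ``comparable to a ball'' therefore costs $k\ge \dim(G)\,\kappa$.
\item The gauge of $\Delta$ is not a norm, since it is not symmetric. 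The conjugation estimate comes from submultiplicativity of the operator norm, so passing to the gauge re-imports the same factor $\rho^{-1}$.
\item Absorbing the constant yields only $|\cF|\le \mathrm{e}^{\dim(G)+2}\lceil\kappa/\rho\rceil^{\dim(G)}$, not the stated bound.
\end{itemize}
The inference is actually false, not merely unproved. In the lemma's case $\kappa(a)=1$ it would give $\mathrm{Ad}_a\overline\Delta\subseteq\overline\Delta$, hence equality because $\det \mathrm{Ad}_a=1$. Then $\mathrm{Ad}_a$ permutes the vertices of $\Delta$, which span $\mathfrak G$, so it has finite order. This fails for a rotation by an irrational multiple of $\pi$ once $d\ge 2$. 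The weaker constant is harmless downstream: Step~3 of the proof of Theorem~\ref{thm: non QC Cantors} only needs $|\cF|<c^d$, and $c$ is free.

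The stated bound can be recovered by dropping the simplex altogether.
\begin{enumerate}
\item Take $\cU:=\matrixexp(B_r)$ with $B_r$ the operator-norm ball of $\mathfrak G$. Then $a\,\overline{B_r}\,a^{-1}\subseteq \overline{B_{\kappa r}}$ for all $a\in W$ is literally submultiplicativity.
\item Replace $M$ by a maximal $\eta$-separated set $S\subset \overline{B_\kappa}$, say with $\eta=0.95$. Maximality gives $\overline{B_\kappa}\subseteq\bigcup_{s\in S}B_\eta(s)$. Disjointness of the balls $B_{\eta/2}(s)\subseteq B_{\kappa+\eta/2}$ gives $|S|\le (2\kappa/\eta+1)^{\dim(G)}\le(\mathrm{e}\lceil\kappa\rceil)^{\dim(G)}$, using $\lceil\kappa\rceil\ge 2$ because $\kappa>1$.
\item For $r<(1-\eta)/(C\kappa)$, the Baker--Campbell--Hausdorff estimate puts $x\mapsto\matrixexp^{-1}(\matrixexp(rs)\matrixexp(x))$ within distance less than $(1-\eta)r$ of $x\mapsto x+rs$ on $\overline{B_r}$. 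A degree argument then shows its image of $B_r$ contains $B_{\eta r}(rs)$, and likewise for right multiplication.
\item Set $\cF:=\{\matrixexp(-rs):s\in S\}$ and take $r$ small enough that $\cU,\cF\subset\cU_0$. Exponentiating gives both inclusions for all $a\in W$ at once.
\end{enumerate}
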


\section{Proof of main theorems}
The aim of this section is to prove Theorems \ref{thm: main1} and \ref{thm: main2}. We provide explicit examples of pairs of stably intersecting Cantor sets by showing that their corresponding renormalization operators satisfy the covering criterion.

Here we recall the covering criterion for a pair of affine Cantor sets $K,K'$, which is a consequence of 
\cite[Theorem~6.6]{NZ1}.

\begin{theorem}[Covering criterion for a pair of affine Cantor sets]\label{thm: covering for affines}
   Let $K,K'\subset \R^d$ be a pair of bunched affine Cantor sets generated with affine maps $\{f_i\}_{i=1}^k,\{f'_{j'}\}_{j'=1}^{k'}$ respectively.
   \begin{itemize}
       \item The associated renormalization operators of the pair $(K,K')$ acting on the space $\Aff(d,\R)$ are given by
   $$R_{i,j'}: X\mapsto f_i^{-1} \circ X \circ f'_{j'}:\;\; X\in \Aff(d,\R).$$
       \item  Given  $f\in \Aff(d,\R)$, the pair $(K,f(K'))$ have stable intersection if there exists an open compact set 
   $\cW \subset \Aff_{\id}(d,\R)$ containing $f$ such that $\cW$ satisfies covering condition with respect to the family of renormalization operators $\{R_{i,j'}\}$ of the pair $(K,K')$.
   \end{itemize}

\end{theorem}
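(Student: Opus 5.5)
The plan is to obtain the statement as a specialization of the general covering criterion \cite[Theorem~6.6]{NZ1}. That criterion works with the space of relative positions of limit geometries of a pair of bunched regular Cantor sets and the renormalization operators acting on it. The work is therefore to identify those objects in the affine case with $\Aff(d,\R)$ and the maps $R_{i,j'}$. For affine $K$ generated by $\{f_i\}$, every limit geometry is itself affine. Concretely, the renormalized inverse branches $f_{a_1}\circ\cdots\circ f_{a_n}$, normalized by their linear part, are already affine, so no limiting procedure is needed. A relative configuration of the pair is then an affine map $X$ placing a copy of $K'$ against $K$.

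Refining $K$ along the branch $i$ and $K'$ along the branch $j'$ changes the configuration by $X\mapsto f_i^{-1}\circ X\circ f'_{j'}$. This is exactly $R_{i,j'}$ in the statement. I would verify this identification directly from the definitions in \cite{NZ1}, and check that restricting to configurations in $\Aff_{\id}(d,\R)$ is compatible with the normalization used there. The relevant fact is that configurations relevant to intersections are taken up to the action of linear parts, and the covering set is only required to lie in $\Aff_{\id}(d,\R)$, with preimages under $R_{i,j'}$ taken in $\Aff(d,\R)$.

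Next I would check the hypotheses of \cite[Theorem~6.6]{NZ1}. The bunching assumption is what guarantees, for $C^{1+\alpha}$-small perturbations $\tilde K,\tilde K'$, that the limit geometries still exist and form a family $C^{1}$-close to the affine ones. Consequently the perturbed renormalization operators $\tilde R_{i,j'}$ are $C^0$-close to $R_{i,j'}$ on compact sets. Since $\cW$ is compact and $\Aff(d,\R)$ is locally compact, the covering condition upgrades to the strong covering condition with some $\delta>0$. By the stability of strong covering under $C^0$-perturbation of the family (\S3 and \cite[\S3]{NZ1}), $\cW$ still covers with respect to $\{\tilde R_{i,j'}\}$.

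Finally, the conclusion comes from iterating the covering. Starting from the configuration of $(\tilde K,\tilde f(\tilde K'))$, which lies in $\cW$ because it is close to $f\in\cW$, we choose words $i_1i_2\cdots$ and $j'_1j'_2\cdots$ so that every renormalized configuration stays in $\cW$. Compactness of $\cW$ keeps the relative positions of the nested pieces $\tilde G(i_1\cdots i_n)$ and $\tilde G'(j'_1\cdots j'_n)$ bounded at their own scale, so these pieces intersect at every level. Their diameters shrink to zero, so the nested intersections converge to a common point of $\tilde K\cap \tilde f(\tilde K')$.

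I expect the main obstacle to be the bookkeeping in the first step. Namely, one has to confirm that the configuration space and operators of \cite[Theorem~6.6]{NZ1}, which involve limit geometries and possibly a normalization of the scaling, reduce exactly to $\Aff_{\id}(d,\R)$ and $R_{i,j'}$ for affine pairs. One also has to check that the translation part of $f$ is handled correctly. Everything else is a direct transcription of the general theorem.
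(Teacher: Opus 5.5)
Your plan matches the paper's. The paper gives no separate argument: it records this statement as a direct consequence of \cite[Theorem~6.6]{NZ1}, with $R_{i,j'}$ as the renormalization operators on relative affine configurations, and your stability-plus-iteration outline describes the mechanism behind that theorem. One correction to your bookkeeping: linear parts are \emph{not} taken up to equivalence. Your strong-covering/stability step therefore needs $\cW$ to be open with compact closure in the full space $\Aff(d,\R)$, as in the paper's applications, where the linear parts of $\cW$ fill an open set $a\,\cU\subset\GL(d,\R)$ produced by the quantitative covering lemma. So the ``$\Aff_{\id}(d,\R)$'' in the statement should be read as $\Aff(d,\R)$, not justified as you do.
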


\subsection{Proof of Theorem \ref{thm: main1}}

\begin{proof}[Proof of Theorem \ref{thm: main1}] 

\textbf{Step 1.}
The first step is the content of Lemma \ref{lem: homogeneous cantors on [0,1]}, which is proved in the next sections. In Lemma \ref{lem: homogeneous cantors on [0,1]}, we construct a pair of thin homogeneous Cantor sets $(K_1, K_1')$, both sharing the same contraction ratio $\ell$, satisfying
\[
\dimHD(K_1) \in (\gamma,\, \gamma+\varepsilon), \quad
\dimHD(K_1') \in (1-\gamma,\, 1-\gamma+\varepsilon),
\]
where $\varepsilon > 0$ and $\gamma \in (0, 1)$ are prescribed real numbers.
The construction is arranged so that we can find a closed segment 
\[
W_1 := \{a\} \times I \subset \Aff(1,\mathbb{R}) \cong\mathbb{R}^2
\]
which, viewed as an one-dimensional object, satisfies $c$-separable covering condition 
(for some specified integer $c \in \mathbb{N}$) 
with respect to the renormalization operators of the pair $(K_1, K_1')$. 
In particular, we require covering 
\begin{equation}\label{eq: covering property of W1}
    \{a\} \times I 
   \subset 
   \bigcup R_{i,j'}^{-1}\big(\{a\} \times I_{(\delta)}\big),
\end{equation}
holds, for some positive $\delta>0$, where the union is taken over all pairs of words $(\ua,\ua')$ of equal
length $2$, and that this covering is $c$-separable.
Note that, since the contraction ratios of the generators of $K_1$ and $K_1'$ 
coincide, each renormalization operator $R_{i,j'}$ preserves the scale coordinate in the 
space $\Aff(1,\mathbb{R}) \cong \mathbb{R}^2$. Therefore, the covering condition \eqref{eq: covering property of W1} is stable with respect to the renormalization operators $R_{i,j'}$ of the pair $(K_1,K_1')$, provided that the generators of $K_1$ and $K_1'$ are allowed to vary, while all having the same contraction ratio.

\textbf{Step 2.}
In the second step, we consider the product Cantor sets
\[
K_d := K_1^d, \qquad K_d' := K_1'^{\,d},
\]
and define
\[
W_d := \{a\} \times I^d 
      = \{(a,t_1,t_2,\dots,t_d) : (a,t_i)\in \{a\}\times I\}.
\]
Since $W_1$ satisfies the covering condition by a $c$-separable cover with
respect to the renormalization operators of the pair $(K_1,K_1')$, it follows
that $W_d$ satisfies the covering condition by a $c^d$-separable cover with
respect to the renormalization operators associated to $(K_d,K_d')$.

\begin{figure}
    \centering
    \includegraphics[width=0.5\linewidth]{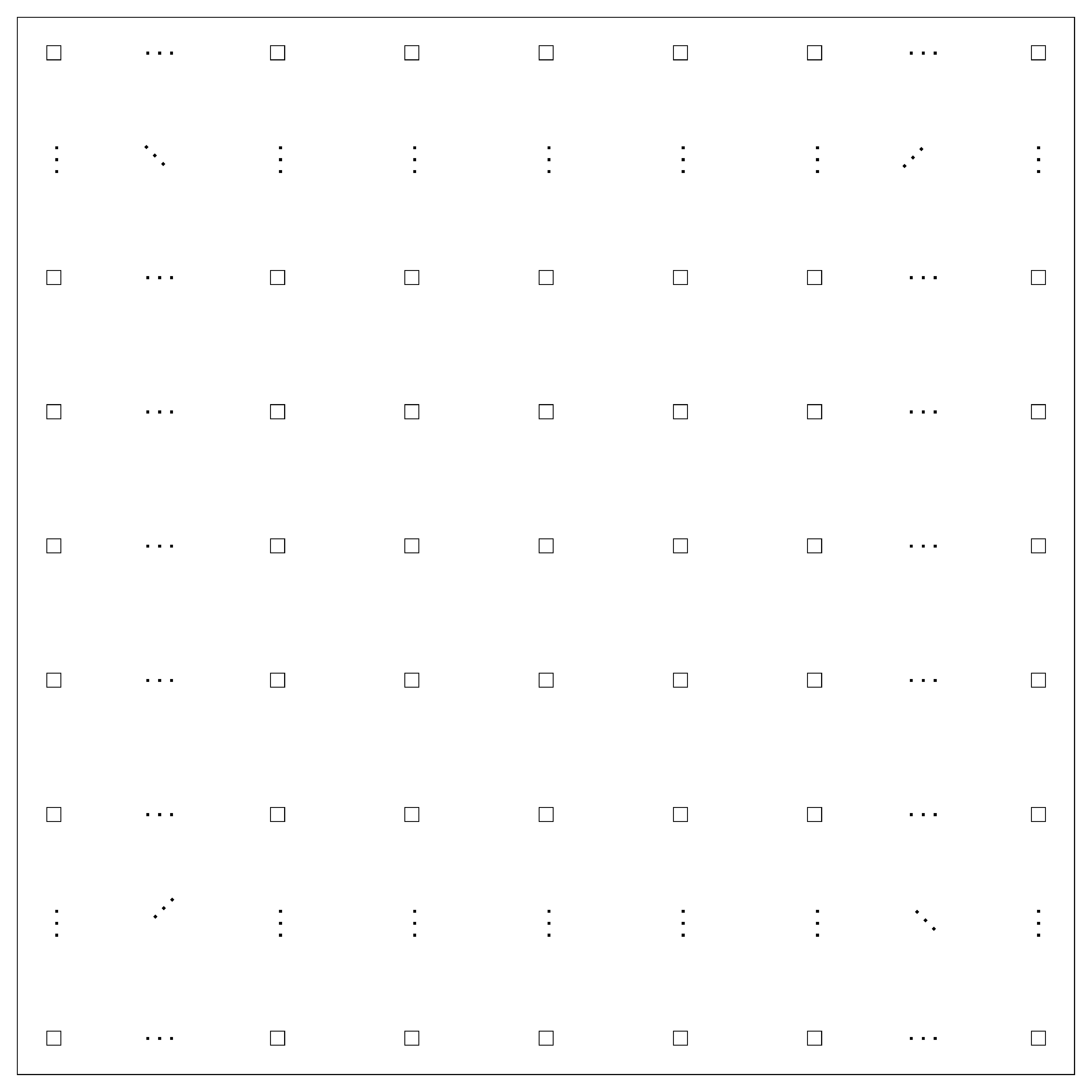}
    \caption{Cantor set $K_d$ in dimension $d=2$, first step approximation.}
    \label{fig:K_d}
\end{figure}

By homogeneity of $K_1$ and $K_1'$, all renormalization operators of the pair
$(K_d,K_d')$ belong to $\Aff_{\id}(d,\R)$. Moreover, since the generators of
$K_1$ and $K_1'$ have identical contraction ratios, the generators of $K_d$ and
$K_d'$ share the same linear part. As a consequence, for any pair of indices $(i,j')$, the corresponding renormalization operator
$R_{i,j'}$ preserves the affine subspace
\(
\{a\}\times\R^d .
 \)
 Restricted to this invariant subspace, the action of the renormalization
operators is well defined, and $W_d$ satisfies a $c^d$-separable covering
property, which is stable under perturbations of the generators, provided that $\{a\}\times \R^d$ remains invariant, similar to \cite[Lemma~7.5]{NZ1}.

\textbf{Step 3.}
We apply Lemma~\ref{lem: generating a convering in SL with d^2 maps} to
$G=\GL(d,\R)$ and $a=\id$. Then, we obtain an open set
\(
\cU \subseteq B_{\delta}(\id) \subset \GL(d,\R)
\)
together with matrices
\(
A_1,\dots,A_{d^2+1} \in B_{\delta}(\id)
\)
such that
\begin{equation}\label{eq:covering in GL}
\overline{\cU} \subset \bigcup_{l=1}^{d^2+1} \cU A_l^{-1} .
\end{equation}

We then define a neighborhood $\cW \subset \Aff(d,\R)$ of $W_d$ by
\[
\cW :=
\bigl\{\, x \mapsto Ax+v : (a,v)\in W_d,\; A\in a\cU \,\bigr\}.
\]

Since $W_d$ satisfies a $c^d$-separable covering property with respect to the
renormalization operators of the pair $(K_d,K'_d)$, this family can be
partitioned into subfamilies
\(
\cH_1,\dots,\cH_{c^d}
\)
such that $W_d$ satisfies the covering condition (inside the invariant subspace
$\{a\}\times\R^d$) with respect to each $\cH_j$.

We then, for a sufficiently small $\delta>0$ which will be determined later, perturb the generators of $K'_d$ so that, for each
$1\le l\le d^2+1$, the action of renormalization operators $R_{i,j'}\in \cH_l$ on the linear parts of the space of affine maps on $\R^d$ coincide and be equal to
$$[X\mapsto a^{-1} \cdot X \circ (a\cdot A_l)] = [X\mapsto X A_l^{-1}],\;\; X\in \gldr.$$
We denote the resulting
Cantor set by $K'$ and set $K:=K_d$.
We claim that for sufficiently small $\delta>0$, $\cW$ satisfies the covering criterion with respect to the
renormalization operators of the pair $(K,K')$. The proof of this claim is similar to the last step of the proof of 
\cite[Theorem~7.7]{NZ1}. Indeed, by \eqref{eq:covering in GL} and that $W_d$ satisfies $c^d$-separable covering for any $p\in \overline{\cW}$ there exists a renormalization operator which maps $p$ into $\cW$.

\begin{figure}
    \centering
    \includegraphics[width=0.9\linewidth]{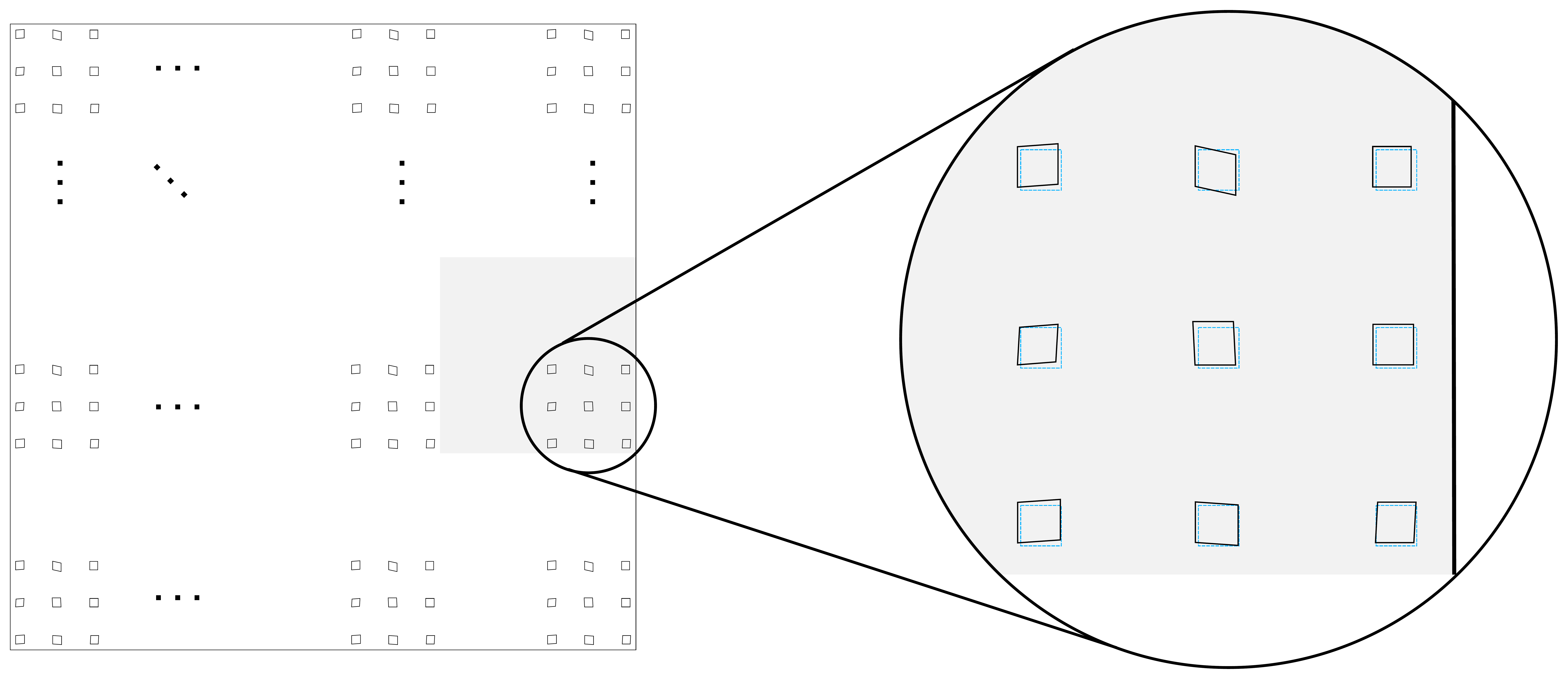}
    \caption{The first step approximation of Cantor set $K'$ on the left, and its zoomed building blocks on the right. The blue rectangles show $K'_d$ in dimension $d=2$.}
    \label{fig:K'-nearlyhomothetical}
\end{figure}


\textbf{Step 4.}
Finally, for the given $p, q\in (0, d)$ with $p+q>d$, let $\delta \in(0, p+q-d)$ and $\epsilon>0$. 
Then, with the construction described above, for $\gamma := (p-\delta)/d$, we obtain a pair of  Cantor sets 
$(K, K') \subset \mathbb{R}^d$ with stable intersection such that 
\[
\dim_{\Aff}(K) \in (p-\delta,\, p - \delta +\epsilon)
\;\;\text{and}\;\;
\dim_{\Aff}(K') \in (d - p+\delta,\, d - p + \delta+\epsilon).
\]
By adding auxiliary affine generators to \(K\) and choosing small and suitable \(\delta ,\varepsilon>0\), the intermediate value theorem allows us to arrange that \(\dim_{\mathrm{Aff}}(K)=p\), using the continuity of the affinity dimension with respect to the affine generators \cite{FengShmerkin}. Although \(\dim_{\mathrm{Aff}}(K)\) need not coincide with \(\dimUB(K)\) and  \(\dim_{\mathrm{H}}(K)\) in general, Falconer’s theorem \cite{FalconerAff} ensures that, after a small perturbation of the translation parts of the generators, equality holds. Moreover, the stable intersection property is preserved under this perturbation, since the covering criterion is open. The same idea of auxiliary generators also helps to construct $K'$ such that one has $\dimUB(K')=\dim_{\Aff}(K')=\dimHD(K')=q>d-q+\delta$.
\end{proof}

\subsection{Proof of Theorem \ref{thm: main2}}

\begin{theorem}\label{thm: non QC Cantors}
Let $d\in \N$, $p,q\in (0,d)$ such that $p+q>d$ and $\varepsilon>0$ be real numbers. There exists $\lambda\in (0,1)$ and a neighborhood $\cU$ of $\id$ in $\SL(d,\R)$ such that for any matrix $A\in \cU$ there exists $\cC^{1+\alpha}$ stably intersecting Cantor sets $K,K'\subset \R^d$ such that
\begin{itemize}
     \item $\dimHD(K)=p$ and 
    $\dimHD(K')=q$.
    \item $K\subset \R^d$ is $(A,\lambda)$-homogeneous Cantor set and $K'\subset \R^d$ is an affine Cantor set which could be arbitrarily close to an $(A,\lambda)$-homogeneous Cantor set.
\end{itemize}
The construction can be adapted to the complex setting as well.
\end{theorem}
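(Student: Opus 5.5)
The plan is to rerun the construction from the proof of Theorem \ref{thm: main1}, but to conjugate the linear part of every generator by a fixed matrix $A$ close to $\id$. Concretely, we start from the pair $(K_d,K'_d)=(K_1^d,K_1'^{\,d})$ of Step 2. Its generators are $x\mapsto \ell x+v_i$ and $x\mapsto \ell x+v'_{j'}$, with a common ratio $\ell$ and a $c^d$-separable covering of $W_d=\{a\}\times I^d$ inside the invariant slice $\{a\}\times\R^d$. We then set $\lambda:=\ell$ and, for $A\in\SL(d,\R)$, let $K^A$ be the affine Cantor set generated by
\[
f_i^A(x)=\lambda A x+v_i .
\]
For $A$ close to $\id$ the open set condition $f_i^A(U)\subset U$, $f_i^A(U)\cap f_j^A(U)=\emptyset$ survives, because it is an open condition on the generators. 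Hence $K^A$ is an $(A,\lambda)$-homogeneous Cantor set. We define $K'^A$ the same way.

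The key observation is how the renormalization operators of $(K^A,K'^A)$ act on an affine map $X(x)=Bx+w$. We have
\[
R_{i,j'}(X)=(f_i^A)^{-1}\circ X\circ f_{j'}^A ,
\]
so the linear part becomes $A^{-1}BA$ and the translation part becomes $\lambda^{-1}A^{-1}(w+Bv'_{j'}-v_i)$. On the set $\{B=a\,\mathrm{Id}\}$ the linear part is therefore left fixed. The translation action is the one from Step 2 composed with the linear map $A^{-1}$, which is $C^0$-close to the original action when $A$ is close to $\id$. Because the covering of $W_d$ is strong, and strong coverings are $C^0$-stable (\S3), $W_d$ still satisfies the $c^d$-separable covering for every $A$ in a small neighborhood $\cU$ of $\id$.

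Next we handle the linear directions, as in Step 3. We apply Lemma \ref{lem: generating a convering in SL with d^2 maps}, or more efficiently Proposition \ref{prop: stronger quantitative covering} with $\kappa$ slightly larger than $1$. This gives a uniform open set $\cU_1\ni\id$ and matrices $A_1,\dots,A_{d^2+1}$ such that
\[
A^{-1}\,\overline{\cU_1}\,A\subset\bigcup_l \cU_1A_l^{-1}
\]
for every $A\in\cU$. We then perturb the generators of $K'^A$ within each class $\cH_l$ to $x\mapsto\lambda A A_l x+v'_{j'}$, which makes $K'$ affine and as close as we like to $(A,\lambda)$-homogeneous. The set $\cW=\{x\mapsto Bx+w: (a,w)\in W_d,\ B\in a\cU_1\}$ then satisfies the covering condition, exactly as in the last part of Step 3. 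Theorem \ref{thm: covering for affines} then gives $C^{1+\alpha}$-stable intersection; bunching holds since both sets are nearly conformal. The complex version is identical, using $\GL(d,\C)$ in the lemma and complex homogeneous pieces.

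To reach the exact values $\dimHD K=p$ and $\dimHD K'=q$, we add auxiliary generators as in Step 4. For $K$ these must also have linear part $\lambda A$, so that homogeneity is kept. We then use continuity of the affinity dimension and Falconer's theorem after generic translations; a uniform choice of $\lambda$ works because the dimension depends only on the number of pieces and on $\lambda A$.

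The main obstacle is uniformity in $A$. The neighborhood $\cU$ has to be chosen before $A$, so two covering margins must hold uniformly over $A\in\cU$: the $\delta$-margin for the translation covering, and the conjugation-stable covering in $\GL(d,\R)$. I would handle both with Proposition \ref{prop: stronger quantitative covering} and with compactness of $\overline{\cU}$.
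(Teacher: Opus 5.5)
Your construction of the stably intersecting pair matches the paper's. You conjugate the generators of $(K_d,K'_d)$ by $A$ and note that $\{a\}\times\R^d$ stays invariant, with the translation action only composed with $A^{-1}$. You get the neighborhood of $\id$ from $\cC^0$-stability of strong coverings. You then handle the linear part with a conjugation-uniform covering $A^{-1}\overline{\cU_1}A\subset\bigcup_l\cU_1A_l^{-1}$ and perturb $K'$ class by class to $x\mapsto\lambda AA_lx+v'_{j'}$.

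One piece of bookkeeping is off. Proposition~\ref{prop: stronger quantitative covering} with $\kappa>1$ gives up to $\mathrm{e}^{d^2+2}\lceil\kappa\rceil^{d^2}$ matrices, not $d^2+1$. You only get $d^2+1$ by using the margin in the proof of Lemma~\ref{lem: generating a convering in SL with d^2 maps} at $a=\id$, for $\kappa$ very close to $1$. Each matrix needs its own class $\cH_l$. So the integer $c$ in Lemma~\ref{lem: homogeneous cantors on [0,1]} must be chosen, \emph{before} $\ell=\lambda$ is fixed, so that $c^d$ is at least the number of matrices. The paper does this by requiring $c^d>\mathrm{e}^{d^2+2}\lceil\kappa\rceil^{d^2}$.

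The step that fails is reaching $\dimHD K=p$ exactly with a uniform $\lambda$. Suppose $\lambda$ is fixed before $A$ and every generator of $K$, auxiliary ones included, must be $x\mapsto\lambda Ax+v$. Then no continuous parameter remains for an intermediate value argument. For generic translations, $\dimHD K=\dimAff K$ depends only on the integer number $N$ of pieces, on $\lambda$, and on the eigenvalue moduli of $A$. For each $A$ you therefore reach only a discrete set of values, and that set moves with $A$. For $A\in\mathrm{SO}(d)\cap\cU$ the value is $\log N/(-\log\lambda)$. For a diagonal $A\neq\id$ in $\cU$ with the same $N$, the affinity dimension is strictly larger. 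Your sentence that a uniform $\lambda$ works ``because the dimension depends only on the number of pieces and on $\lambda A$'' describes exactly this obstruction.

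The paper's Step~4 proceeds differently. It uses Feng--Shmerkin continuity to get $\dimAff K\in(p,p+\varepsilon)$ uniformly for $A\in\cU$. It then reaches the exact value by one more small continuous perturbation, justified by openness of stable intersection. To keep $(A,\cdot)$-homogeneity, that perturbation can only move the scalar. So the final contraction is some $\lambda'$ close to $\lambda$ that depends on $A$; the paper's sketch leaves this implicit. You should either let the scale depend slightly on $A$, or settle for $\dimHD K\in(p,p+\varepsilon)$ with uniform $\lambda$. The latter is what the otherwise unused $\varepsilon$ in the statement suggests.
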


\begin{proof}[Proof of Theorem \ref{thm: main2}] Let $K,K'\subset \R^d$ be as in Theorem \ref{thm: non QC Cantors}. Given $n\geq 1$, one can consider the $n$-th iteration of the generators of $K$ which implies that $K$ is also an $(A^n,\lambda^n)$-homogeneous Cantor set. Therefore, by choosing $K'$ close enough to a $(A,\lambda)$-homogeneous Cantor set, $K'$ also can be considered as arbitrarily close to an $(A^n,\lambda^n)$-homogeneous Cantor set. 
This yields Theorem~\ref{thm: main2} for every \(L \in \mathrm{SL}(d,\mathbb{R})\) with non-negative real eigenvalues. Indeed, since diagonalizable (over $\C$) matrices are dense in \(\mathcal U\), it follows that
\[
\mathcal U^* := \{ A^n : A \in \mathcal U,\; n \in \mathbb N \}
\]
is dense in the set of matrices \(L \in \mathrm{SL}(d,\mathbb{R})\) whose real eigenvalues are non-negative.

The case where \(L \in \mathrm{SL}(d,\mathbb{R})\) has \(m\ge 1\) negative eigenvalues is treated analogously, using a corresponding version of Theorem~\ref{thm: non QC Cantors} in which \(\mathrm{Id}\) is replaced by the diagonal matrix \(D \in \mathrm{SL}(d,\mathbb{R})\) whose eigenvalues are \(\pm 1\), with exactly \(m\) entries equal to \(-1\). 
Indeed, the sequence \(\{B^{1/n}\}_{n=2k+1}\) converges to this diagonal matrix when $B$ is diagonal with $m$ negative real eigenvalues. In addition, in this setting, one also requires an analogous version of Proposition~\ref{prop: stronger quantitative covering} adapted to \(D\), as will be shown later.
 \end{proof}

\begin{proof}[Proof of Theorem~\ref{thm: non QC Cantors}]
The proof follows the same steps as in Theorem~\ref{thm: main1}.

    \textbf{Step 1.}
    Let $c \geq 2$ be an integer, and let $K_1, K'_1 \subset [0,1]$ be $\ell$-homogeneous Cantor sets constructed in Lemma~\ref{lem: homogeneous cantors on [0,1]} such that
    \begin{equation}\label{eq: HD, frac d}
    \dimHD(K_1) \in \left(\frac{p}{d} + \frac{\varepsilon}{3d}, \frac{p}{d} + \frac{2\varepsilon}{3d}\right), \quad
    \dimHD(K'_1) \in \left(1 - \frac{p}{d} + \frac{\varepsilon}{3d}, 1 - \frac{p}{d} + \frac{2\varepsilon}{3d}\right).
    \end{equation}
    Then $K_d := K_1^d$ and $K'_d := (K'_1)^d$ are $(\id, \lambda)$-homogeneous Cantor sets generated, respectively, by the families $\{f_i\}_{i=1}^{n^d}$ and $\{f'_{j'}\}_{j'=1}^{n'^d}$ of affine maps
    \[
    f_i : x \mapsto \lambda x + v_i, \quad f'_{j'} : x \mapsto \lambda x + v'_{j'},
    \]
    where $\lambda := \ell^d$. By Lemma~\ref{lem: homogeneous cantors on [0,1]}, there exist $a \in \mathbb{R}$ and an interval $I \subset \mathbb{R}$ such that $W_d := \{a\} \times I^d \subset \{a\} \times \mathbb{R}^d$ satisfies the $c^d$-separable covering condition with respect to the renormalization operators $R_{i,j'}$ of the pair $(K_d, K'_d)$ on $\Aff_{\id}(d, \mathbb{R})$, given by
    \begin{equation*}\label{eq: action of R_i,j'}
    R_{i,j'} : [x \mapsto kx + t] \mapsto [x \mapsto kx + \lambda^{-1}(t + k v'_{j'} - v_i)].
    \end{equation*}
 
    \textbf{Step 2.} 
    Given $A \in \mathrm{SL}(d, \mathbb{R})$, let $K_A, K'_{A}$ be $(\id, \lambda)$-homogeneous Cantor sets generated by the families $\{\hat{f}_i\}_{i=1}^{n^d}, \{\hat{f'}_{j'}\}_{j'=1}^{n'^d}$, where for each $1 \leq i \leq n^d$ and $1 \leq j' \leq n'^d$,
$$ \hat{f}_i : x \mapsto \lambda A x + v_i, \quad \hat{f'}_{j'} : x \mapsto \lambda A x + v'_{j'}. $$
The renormalization operators $\hat{R}_{i,j'}$ of the pair $(K_A, K'_A)$ on $\Aff_{\id}(d, \mathbb{R})$ are given by
$$ \hat{R}_{i,j'} : [x \mapsto kx + t] \mapsto [x \mapsto kx + \lambda^{-1} A^{-1}(t + k v'_{j'} - v_i)]. $$
By the stability of strong covering, there exists a neighborhood $\hat{\cU}$ of $\id$ in $\gldr$ such that for any $A \in \hat{\cU} \cap \mathrm{SL}(d, \mathbb{R})$, $W_d$ satisfies $c^d$-separable covering with respect to the action of renormalization operators of the pair $(K_A, K'_A)$.

\begin{figure}
    \centering
    \includegraphics[width=\linewidth]{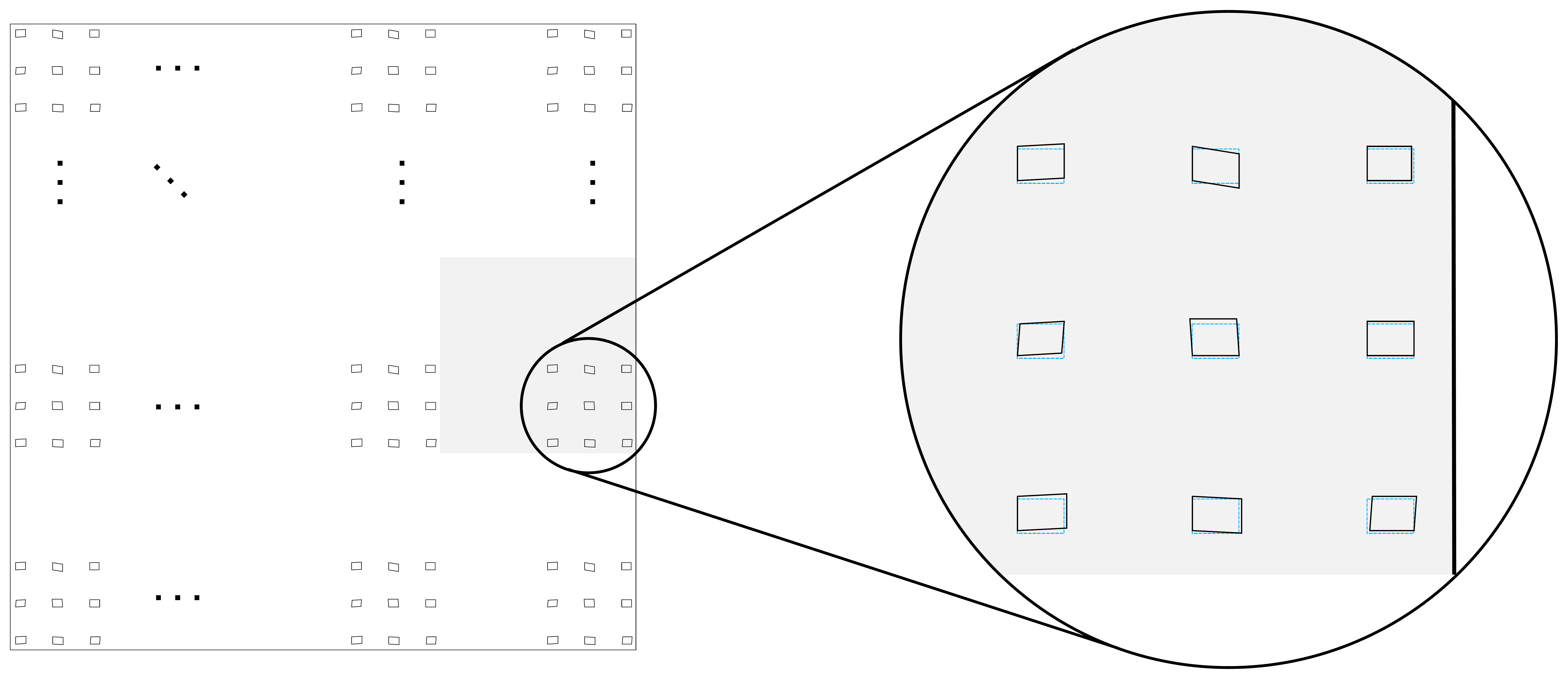}
    \caption{The first step approximation of Cantor set $K'$ on the left, and its zoomed building blocks on the right. The blue rectangles show $K'_A$ in dimension $d=2$, and for some diagonal matrix $A$.}
    \label{fig:K'_A}
\end{figure}

    \textbf{Step 3.} 
    Let $\kappa > 1$ be a fixed real number, and let $c \in \mathbb{N}$ be such that
\[
c^d > e^{d^2 + 2} \lceil \kappa \rceil^{d^2 }.
\]
By Proposition~\ref{prop: stronger quantitative covering}, for any sufficiently small $\delta > 0$ (to be determined later), there exists a neighborhood $\cU_0 \subseteq B_\delta(\id)$ of $\id$ in $\mathrm{GL}(d, \mathbb{R})$ and a finite family
\[
\cF := \big\{ [x \mapsto F_1 x], [x \mapsto F_2 x], \dots, [x \mapsto F_l x] \big\} \subset B_\delta(\id),
\]
where $F_i \in \mathrm{GL}(d, \mathbb{R})$ and $l = |\cF| < c^d$, such that for any $A \in \mathrm{GL}(d, \mathbb{R})$ with $\|A\| \cdot \|A^{-1}\| \leq \kappa$,
\begin{equation}\label{eq: covering dominated}
\overline{\cU_0} \subseteq \bigcup_{f \in \cF} A \cU_0 (A f)^{-1}.
\end{equation}
Let
\[
\cU := \hat{\cU} \cap \left\{ A \in \mathrm{SL}(d, \mathbb{R}) : \|A\| \cdot \|A^{-1}\| <\kappa \right\}
\]
be a neighborhood of $\id$ in $\mathrm{SL}(d, \mathbb{R})$.

Fix $A \in \cU$, and define a neighborhood $\cW \subset \Aff(d, \mathbb{R})$ of $W_d$ by
\[
\cW := \left\{ x \mapsto Bx + v : (B, v) \in \lambda \cU_0 \times I^d \right\}.
\]
Since $W_d$ satisfies a $c^d$-separable covering property with respect to the renormalization operators of the pair $(K_A, K'_A)$, this family can be partitioned into subfamilies $\cH_1, \dots, \cH_{c^d}$ such that $W_d$ satisfies the covering condition (within the invariant subspace $\{a\} \times \mathbb{R}^d$) with respect to each $\cH_j$.

For each $1 \leq j \leq l$, perturb the generators $[x \mapsto \lambda A x + u]$ of $K'_A$ corresponding to $\cH_j$ to $[x \mapsto (\lambda A) \circ F_j x + u]$, so that the action of renormalization operators $R_{i, j'} \in \cH_j$ on the linear parts of $\Aff(d, \mathbb{R})$ becomes
\[
[X \mapsto \lambda^{-1} A^{-1} \circ X \circ (\lambda A) \circ F_j] = [X \mapsto A^{-1} X A \circ F_j], \quad X \in \mathrm{GL}(d, \mathbb{R}).
\]
Let $K := K_A$ and $K'$ be the resulting Cantor set. The conclusion that $\cW$ satisfies the covering criterion with respect to the renormalization operators of $(K, K')$ for sufficiently small $\delta > 0$ follows exactly as we did in the proof of Theorem \ref{thm: main1}, using the covering condition \eqref{eq: covering dominated} and the stability of the covering of $I^d \subset \mathbb{R}^d$ under perturbations of the renormalization operators.

\textbf{Step 4.} By Falconer's theorem \cite{FalconerAff}, for almost every choice of vectors $v_i, v'_{j'} \in \mathbb{R}^d$, the upper box dimension and Hausdorff dimension coincide with the affinity dimension. 
Since stable intersection is an open property with respect to the Cantor sets $K$ and $K'$, we can perturb the vectors $v_i, v'_{j'} \in \mathbb{R}^d$ so that
\[
\dimHD(K) = \dimUB(K)= \dimAff(K), \quad \dimHD(K') =  \dimUB(K')=\dimAff(K').
\]
Moreover, since $K_d$ and $K'_d$ are self-similar Cantor sets, by \eqref{eq: HD, frac d}, we have
\[
\dimUB(K_d)=\dim_{\rm{Aff}}(K_d) = \dim_{\rm{H}}(K_d) = d \cdot \dim_{\rm{H}}(K_1) \in \left(p + \frac{\varepsilon}{3}, p + \frac{2\varepsilon}{3}\right),
\]
\[
\dimUB(K'_d)=\dim_{\rm{Aff}}(K'_d) = \dim_{\rm{H}}(K'_d) = d \cdot \dim_{\rm{H}}(K'_1) \in \left(d - p + \frac{\varepsilon}{3}, d - p + \frac{2\varepsilon}{3}\right).
\]
By \cite{FengShmerkin}, the affinity dimension is continuous with respect to the $\mathrm{GL}(d, \mathbb{R})$-parts of the generators. By choosing $\kappa > 1$ sufficiently close to $1$, the neighborhood $\cU \subset \mathrm{SL}(d, \mathbb{R})$ is close to $\mathrm{id}$, and thus
\[
|\dim_{\rm{Aff}}(K_A) - \dim_{\rm{Aff}}(K_d)| < \frac{\varepsilon}{3}, \quad |\dim_{\rm{Aff}}(K'_A) - \dim_{\rm{Aff}}(K'_d)| < \frac{\varepsilon}{4}.
\]
Using the continuity of the affinity dimension again, for sufficiently small $\delta > 0$,
\[
|\dim_{\rm{Aff}}(K') - \dim_{\rm{Aff}}(K'_A)| < \frac{\varepsilon}{12}.
\]
Therefore,
\[
\dim_{\Aff}(K) \in (p, p + \varepsilon), \quad \dim_{\Aff}(K') \in (d - p, d - p + \varepsilon),
\]
since $K = K_A$. This implies that, within these constructions, the set of attainable values of \(\dim_{\mathrm{Aff}}(K)\) is dense in \((0,\tfrac{d}{2})\). Using again the continuity of the affinity dimension and the openness of the stable intersection, we conclude that for any \(p \in (0,\tfrac{d}{2}]\) one can construct a Cantor set \(K\) with \(\dim_{\mathrm{Aff}}(K)=p\). For \(p>\tfrac{d}{2}\), the auxiliary generator technique discussed in the final part of the proof of Theorem~\ref{thm: main1} applies. Similarly, we may construct $K'$ such that $\dim_{\Aff}(K')=q$ for $q>d-p$. The equalities 
$$\dimUB(K)=\dimHD(K)=\dim_{\Aff}(K)=p,\;\; \dimUB(K')=\dimHD(K')=\dim_{\Aff}(K')=q$$
follow by Falconer's Theorem after a perturbation of $K,K'$.
\end{proof}
\begin{remark}
While the proofs of Theorems \ref{thm: main1} and 
\ref{thm: main2} are presented only for the real setting, the complex case can be treated in the same way by applying Proposition \ref{prop: stronger quantitative covering} for the group $G:=\GL(d,\mathbb{C})$.
\end{remark}

\section{Homogeneous Cantor sets on the real line}

We now prove the following lemma on the homogeneous pairs of Cantor sets in $[0,1]$, which constitutes a key 
element
in the proofs of Theorems~\ref{thm: main1} and~\ref{thm: main2}. Owing to the complexity of the underlying construction, the proof is carried out in several stages, which we describe in detail.

\begin{lemma}\label{lem: homogeneous cantors on [0,1]}
Let $\gamma \in (0,1)$, $c\geq 2$ a given integer and $\varepsilon>0$ be an arbitrary positive real number.
There exists a pair of $\ell$-homogeneous Cantor sets $K_1,K_1'\subset [0,1]$ such that
\begin{itemize}
    \item $K_1$ and $K_1'$ are generated by the families of affine maps $\{f_i\}_{i=1}^n$ and $\{f'_j\}_{j=1}^{n'}$ respectively, such that
$$\forall i\in \{1,\dots,n\}: \;f_i(x)= \ell x+ t_i,\;\; \forall j\in \{1,\dots,n'\}: \;f'_j(x)= \ell x+ t'_j,$$
where $\ell\in (0,1)$ is some real number and $n, n'$ are sufficiently large positive integers such that
     $$\dimHD(K_1) \in (\gamma, \gamma+\varepsilon),\;\dimHD(K_1')\in (1-\gamma,1-\gamma+\varepsilon).$$
    \item There exist $a\in \R^+$ and an interval $I\subset \R$ such that the set $\{a\}\times I$ is a recurrent set under the action of renormalization operators of the pair $(K_1,K_1')$. Indeed, $\{a\}\times I$ satisfies the $c$-separable covering condition with respect to the renormalization operators of the pair $(K_1,K_1')$.
\end{itemize}
Moreover, $K_1,K'_1$ can be arbitrarily thin Cantor sets. Indeed, $\tau(K_1),\tau(K'_1)<\varepsilon$ where $\tau(K)$ is the thickness of $K$. 
\end{lemma}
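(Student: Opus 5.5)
The plan is to build $K_1,K_1'$ as two-scale "digit" Cantor sets, so that the renormalization dynamics on the fiber $\{a\}\times\R$ become an explicit family of translations of one interval. The scale coordinate is fixed by homogeneity: $R_{i,j}(x\mapsto ax+t)= (x\mapsto ax+\ell^{-1}(t+a t'_j-t_i))$. On $\{a\}\times\R$ the operators therefore act by
\[
t\mapsto \ell^{-1}(t+a t'_j-t_i).
\]
The covering condition for $\{a\}\times I$ amounts to the following requirement. For every $t\in \overline I$ there must exist $(i,j)$, and in fact one in each of $c$ disjoint subfamilies, with $t+at'_j-t_i\in \ell\, I_{(\delta)}$.

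Equivalently, the difference set $D=\{t_i-at'_j\}$ must be $c$-fold $\delta$-robustly dense at scale $\ell|I|$ inside $I$. Because both sets use the same ratio $\ell$, the same covering persists to level-2 words (as used in \eqref{eq: covering property of W1}) automatically.

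\textbf{Choice of translations.} Fix a large integer $m$ and set $\ell=1/m^{2}$ (or $\ell=1/(Nm)$ with a spare factor to force thinness). Take $K_1$ with translations $t_i=\ell\,\cdot (\text{multiples of } M)$ for $i\le n$, and $K_1'$ with $t'_j=\ell\cdot j$ for $j\le n'$, where $n\cdot n'\approx c\,m^2$ and $M\approx n'$.

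With $a=1$, the differences $t_i-t'_j$ then form $c$ interleaved copies of the full lattice $\ell\Z$ on an interval of length about $\ell nM\approx 1$. Choosing $I$ of length roughly $1-$ a margin, and shrinking the interval $[0,1]$-images to length $\ell$ times a constant smaller than the spacing, each point of $\overline I$ lies deep inside at least $c$ translates $\ell I+(t_i-t'_j)$. The copies are indexed by distinct pairs $(i,j)$, so they can be placed in different subfamilies; this gives $c$-separability.

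To avoid overlap of the first-level pieces (needed for a Cantor set), I will use $I$ of length slightly bigger than the lattice gap but keep $t_i,t'_j$ spaced by at least $2\ell$. The multiplicity $c$ is then obtained by letting the lattice $\{t_i-t_j'\}$ have gaps $\ell|I|/(c+1)$.

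\textbf{Dimensions and thinness.} We have $\dimHD K_1=\log n/\log(1/\ell)$ and $\dimHD K_1'=\log n'/\log(1/\ell)$, with $\log n+\log n'\approx \log(1/\ell)+\log(c)+O(1)$. Choosing $n\approx \ell^{-\gamma-\varepsilon/2}$ and letting $\ell\to0$ makes the $O(\log c)$ excess negligible, so both dimensions land in the prescribed windows.

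Thickness is small because the gaps between consecutive pieces (of size about $\ell M$ for $K_1$) dominate the piece size $\ell$. For $K_1'$, I will instead group translations into clusters so that large gaps also appear there. This means using a genuinely two-scale pattern: $t'_j$ in blocks with big gaps between blocks, and $t_i$ filling the complementary scale, in the spirit of a base-$m$ digit splitting into "high digits" and "low digits".

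\textbf{Main obstacle.} The hardest step is reconciling three requirements at once: disjointness of the first-level pieces, with the convex hull inside $[0,1]$; arbitrarily small thickness for both sets; and robust $c$-fold coverage by $D$ at scale $\ell|I|$. Thinness forces sparse translations, while coverage forces $D$ to be dense with multiplicity. I would first try the digit decomposition $\ell=m^{-2k}$. Here $K_1$ uses the digits that are free in the even positions and $K_1'$ those in the odd positions (a Cantor-like two-scale split), so that $D$ covers every residue $c$ times while each set individually has large relative gaps. The scale parameter $a$ is then tuned slightly away from $1$ to turn the exact lattice covering into one with $\delta$-margins.
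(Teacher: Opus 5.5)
\paragraph{Verdict.} There is a genuine gap: the construction is missing. Your reduction is correct. On the fiber over $a$ the operators act by $t\mapsto \ell^{-1}(t+at'_j-t_i)$, so you need $\ell$-separated translation sets in $[0,1-\ell]$ whose translates $\ell I_{(\delta)}+(t_i-at'_j)$ cover $\overline I$ within each of $c$ subfamilies. But building such sets compatibly with the dimension windows and with thinness is the entire content of the lemma. Your proposal leaves this as the ``main obstacle'', and the explicit choices you do write down fail.
\begin{itemize}
\item As written ($t_i\in \ell M\mathbb{Z}$, $M\approx n'$, $nn'\approx c/\ell$), the $t_i$ span a length $\ell nM\approx c$, not $\approx 1$, so they do not fit in $[0,1-\ell]$.
\item With $|I|<1$ the translates have length $\ell(|I|-2\delta)<\ell$. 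If your $c$ copies of $\ell\mathbb{Z}$ coincide, they leave gaps. If they are shifted by multiples of $\ell/c$, the multiplicity averages $c(|I|-2\delta)<c$, so some point is covered fewer than $c$ times and no partition into $c$ covering subfamilies exists.
\item Tuning $a$ away from $1$ cannot turn an exact tiling into a robust cover. A $\delta$-robust cover of $\overline I$ by intervals of length $\ell(|I|-2\delta)$ needs more than $1/\ell$ of them in each subfamily, which is strictly more than an exact tiling has. Changing $a$ does not change that count, so the slack $\ell nn'>c$ must be built in from the start.
\item The later adjustments (spacing $\ge 2\ell$, lattice gaps $\ell|I|/(c+1)$, clusters for thinness) are only announced. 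You never check that they can coexist with $t_i,t'_j\in[0,1-\ell]$ and the dimension windows.
\item The dimension bookkeeping is off. With $n\approx\ell^{-\gamma-\varepsilon/2}$ and $nn'\approx c/\ell$ you get $\dimHD K_1'=1-\gamma-\varepsilon/2+\log c/\log(1/\ell)$, which drops below $1-\gamma$ as $\ell\to0$. The excess $\log(\ell nn')$ has to be split between the two sets. The paper takes $n\approx\ell^{-(\gamma+\varepsilon')}$ and $n'\approx\ell^{-(1-\gamma+\varepsilon')}$ with $\ell^{-2\varepsilon'}=3c$.
\end{itemize}

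\paragraph{The missing idea.} Take $a$ far from $1$ and split each renormalization into two explicit fiber maps. The paper puts $a=\sqrt\ell$.
\begin{itemize}
\item $R_i^{-1}$ is a homothety of ratio $\ell^{-1}$. It sends the $i$-th of $n$ equal subintervals $I_i\subset I$ onto a single interval $J$ over $\ell^{-1}a$. Hence $|J|=\ell^{-1}|I|/n$, and consecutive $t_i$ differ by $|I|/n\gg\ell$.
\item $R'_{c(k-1)+l}$ is a translation by $\ell^{-1}a\,t'_{c(k-1)+l}$. It sends the $k$-th of $n'/c$ equal subintervals $J_k\subset J$ onto a fixed $\tilde I_l\subset I_{(\delta)}$.
\item The families $\mathcal F_l=\{R'_{c(k-1)+l}\}_k$ then give the $c$-separable cover by design, with no decomposition argument needed.
\end{itemize}
The separation and containment conditions on the $t'_j$ reduce to $2\beta(c-1)<|I|/a<n/2$, $|I|<1$ and $2c\le \ell nn'\le c(\beta-1)/2$. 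In this scheme these force $a$ to be small. They are met by $|I|=\sqrt{n\ell}$ (so $|I|/a=\sqrt n$), $\beta=7$ and $\ell=(3c)^{-1/(2\varepsilon')}$.

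Thinness of $K_1'$ comes from running the construction with $cN$ in place of $c$ and keeping only the families $\mathcal F_N,\mathcal F_{2N},\dots,\mathcal F_{cN}$. This keeps a $c$-separable cover while making every gap at least $N\ell$.

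An $a=1$ design is not excluded in principle. You would, however, have to exhibit one that meets all four requirements at once.
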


The renormalization operators $\{R_i\}_{i=1}^n$ and $\{R'_j\}_{j=1}^{n'}$ associated to this pair are affine maps acting   (respectively) from left and right on the space $\Aff(1,\R)$ as:
\begin{align}
     \label{eq:renormalization operators:K_1}
R_i^{-1} &: (s,t)\mapsto \left(\ell^{-1}s,\ell^{-1}(t-t_i)\right),\\  \label{eq:renormalization operators:K'_1}
R'_j&: (s,t)\mapsto (\ell s, st'_j+t).    
\end{align}

\begin{figure}[t]
    \centering
       \includegraphics[width=\linewidth]{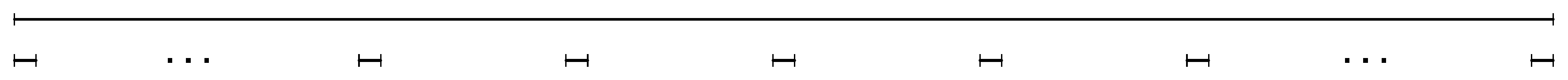}
    \caption{First step approximation of the Cantor set $K_1$.}
    ~\medskip~
    \label{fig:K1-any-dim}
\end{figure}
\begin{figure}
    \centering
    \includegraphics[width=\linewidth]{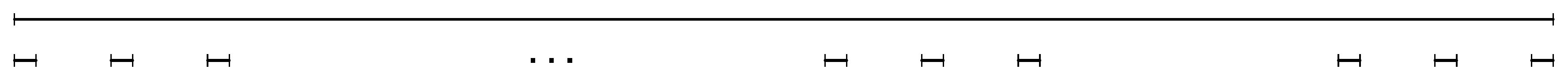}
    \caption{First step approximation of the Cantor set $K_1'$}
    \label{fig:K'1-any-dim}
\end{figure}

It is notable to observe that the operators $\{R'_j\}_{j=1}^{n'}$ are acting like translations on the fibers, and the operators 
$\{R_i^{-1}\}_{i=1}^n$ are homotheties in the following manner:
\begin{align}\label{eq:homomthety and translation renormalization operators}
R_i^{-1}& : (s,\tilde{t}+\dfrac{t_i}{1-\ell})\mapsto \left(\ell^{-1}s,\ell^{-1}\tilde{t} +\dfrac{t_i}{1-\ell}\right).
\end{align}

We claim that the Cantor sets $K_1$ and $K'_1$, constructed using the following data, satisfy the desired properties stated in Lemma~\ref{lem: homogeneous cantors on [0,1]}.

Let
\begin{equation}\label{eq: parameter choosing 1}
\beta := 7,\qquad
\ell := (3c)^{\frac{-1}{2\varepsilon'}},\qquad
n := \bigl\lfloor \ell^{-(\gamma+\varepsilon')} \bigr\rfloor,\qquad
n' := \ell^{-(1-\gamma+\varepsilon')},
\end{equation}
and set
\begin{equation}\label{eq: parameter choosing 2}
|I| := \sqrt{n\ell},\qquad
|J| := (n\ell)^{-1/2},\qquad
a := \ell^{1/2},
\end{equation}
where $\varepsilon' < \min\{\varepsilon,\, \gamma/7\}$ is chosen so that $\ell^{-(1-\gamma+\varepsilon')}$ is an integer divisible by $c$. For $1 \leq i \leq n$, define
\begin{align}\label{eq: parameter choosing 3}
     t_i := z_0 - \dfrac{(i-1)|I|}{n} - \ell w_0,\; i\in \{1,2,\cdots,n\}.
\end{align} 
For $1 \leq k \leq n'/c$ and $1 \leq l \leq c$, define
\begin{equation}\label{eq: parameter choosing 4}
t'_{c(k-1)+l}:=
\dfrac{\ell}{a}\left(z_0-\delta a -w_0 - \dfrac{c-l}{\beta(c-1)}\left(|I|-2\delta a-\dfrac{|J|}{n''}\right) +\dfrac{(k-1)|J|}{n''}\right).
\end{equation}
Then, there exist $w_0$ such that the interval $I := [w_0 - |I|, w_0]$ satisfies the required properties.

In the remainder of this paper, we provide a construction such that the parameters defined above satisfy the desired constraints. At the end, we will discuss how the thickness could be arbitrarily small. 

\subsection*{Steps of construction}

We will choose parameters 
\[
a, n, n', \ell, \delta,\{t_i\}_{i=1}^{n}, \{t'_j\}_{j=1}^{n'},
\]
and intervals 
\[
I = [\,z_0 - |I|,\, z_0\,] \subset \mathbb{R}, 
\; 
J = [\,w_0 - |J|,\, w_0\,] \subset \mathbb{R},
\]
such that the following properties hold:

\begin{itemize}
    \item[(A)] The affine maps $\{x \mapsto \ell x + t_i\}_{i=1}^{n}$ generate a Cantor set 
    $K_1 \subset [0,1]$ with 
    \[
    \dimHD(K_1) \in (\gamma,\, \gamma+\varepsilon).
    \]

    \item[(B)] The affine maps $\{x \mapsto \ell x + t'_j\}_{j=1}^{n'}$ generate a Cantor set 
    $K_1' \subset [0,1]$ with 
    \[
    \dimHD(K_1') \in (1-\gamma,1-\gamma+\varepsilon).
    \]

    \item[(C)] For every point $x \in \{a\} \times I$, there exists a renormalization operator 
    $R_i^{-1}$ associated to $K_1$ such that 
    \[
    R_i^{-1}(x) \in \{\ell^{-1} a\} \times J.
    \]

    \item[(D)] The renormalization operators $\{R'_j\}_{j=1}^{n'}$ associated to $K_1'$ can be 
    partitioned as $\bigsqcup_{l=1}^{c} \mathcal{F}_l$ such that, for every 
    $y \in \{a\} \times I$ and every $l \in \{1,\dots,c\}$, there exists 
    $R' \in \mathcal{F}_l$ with 
    \[
    R'(y) \in \{a\} \times I_{(\delta)}.
    \]
\end{itemize}

The main difficulty of the construction is to satisfy (C) and (D) in a way that remains compatible with (A) and (B).  
Below we give the outline; in the next section we compute the precise constraints needed to ensure (A) and (B).


    \textbf{Performing (C).}  
    By \eqref{eq:homomthety and translation renormalization operators}, the renormalization operators 
    $\{R_i\}_{i=1}^{n}$ associated to $K_1$ are homotheties with center 
    \[
    y_i := \left(0,\, \frac{t_i}{1-\ell}\right)
    \]
     and scale factor $\ell^{-1} > 1$.  
    Denote by $H_i$ the homothety with center $y_i$ and ratio $\ell^{-1}$.  
    We choose the points $y_i$ so that
    \[
    \{a\} \times I 
        = \bigsqcup_{i=1}^{n} 
          H_i^{-1}\!\left(\{\ell^{-1} a\} \times J\right).
    \]
    This implies that the translations $t_1,\dots,t_n$ can be chosen so that 
    $I$ admits a partition (overlaps on end-points of intervals is allowed) $I = \bigsqcup_{i=1}^{n} I_i$ with 
    \[
    |I_1| = \cdots = |I_n|=|I|/n,
    \;\; 
    R_i^{-1}\big(\{a\}\times I_i\big) 
        = \{\ell^{-1} a\} \times J 
        \;\text{for every } i.
    \]

    \textbf{Performing (D).}  
    By \eqref{eq:renormalization operators:K'_1}, each 
    renormalization operator associated to $K_1'$ is a translation from 
    $\{\ell^{-1} a\} \times \mathbb{R}$ to $\{a\} \times \mathbb{R}$.  
    Let $n'' = n'/c$ and partition 
    \[
    J = \bigsqcup_{k=1}^{n''} J_k,
    \;\; |J_1|=\cdots=|J_{n''}|=|J|/n''.
    \]
    We choose
\[
0 \le t'_1 < t'_2 < \cdots < t'_{n'} \le 1-\ell
\]
so that for all $i \in \{1, \dots, n''-1\}$,
$t'_{i+1} - t'_i > \ell$ and
there exist intervals 
\(\widetilde{I}_1, \dots, \widetilde{I}_c \subset I_{(\delta)}\), 
to be chosen later, each of length \(|J|/n''\), satisfying
\[
R'_{\,c(k-1)+l}(J_k) = \widetilde{I}_l,
\qquad 
k = 1, \dots, n'',\; l = 1, \dots, c.
\]
Defining
\(
\mathcal{F}_l := \{ R'_{\,c(k-1)+l} \}_{k=1}^{n''},
\)
property (D) then follows immediately.

We will follow the steps of the construction of 
$K_1$ and $K_1'$, and we compute all the constraints on the parameters involved.

\subsection*{Constraints on $\dimHD$}
We have 
$$\dimHD(K_1)=  \dfrac{\log n}{-\log \ell}, \quad \dimHD(K_1')=  \dfrac{\log n'}{-\log \ell}.$$  
Therefore, the parameters must satisfy
\begin{equation}\label{eq:dimHD conditions}
    \boxed{n\in (\ell^{-\gamma},\ell^{-(\gamma+\varepsilon)} ), \quad
 n'\in \left(\ell^{-(1-\gamma)},\ell^{-((1-\gamma)+\varepsilon)} \right).} 
\end{equation}
At the end, we will choose 
\begin{equation}
n := \left\lfloor \ell^{-(\gamma+\varepsilon')} \right\rfloor, 
\;\; 
n' := \left\lfloor \ell^{-(1-\gamma+\varepsilon')} \right\rfloor, \;\; \varepsilon'\in (0,\varepsilon),
\end{equation}
where $\lfloor x \rfloor$ denotes the integer part of $x$.

\subsection*{Constraints on the construction of $K_1$}

We partition the interval $I$ into $n$ sub intervals
\[
I = \bigsqcup_{i=1}^{n} I_i, \qquad 
I_i = [z_i, z_{i-1}], \quad 
z_i := z_0 - \frac{i |I|}{n}, \quad i = 1, \dots, n.
\]
We then choose the parameters $\{t_i\}_{i=1}^n$ such that:

\begin{itemize}
    \item[(a)] For each $i = 1, \dots, n$, the image of the operator $R_i^{-1}$ restricted to $\{a\} \times I_i = \{a\} \times [z_i, z_{i-1}]$ satisfies
    \[
    R_i^{-1}(\{a\} \times I_i) = \{\ell^{-1} a\} \times J = \{\ell^{-1} a\} \times [w_0 - |J|, w_0].
    \]
    
    \item[(b)] The parameters satisfy
    \[
    1-\ell \ge t_1 > t_2 > \cdots > t_n \ge 0, 
    \qquad t_i - t_{i+1} > \ell \text{ for } i = 1, \dots, n-1.
    \]
    This ensures that the iterated function system 
    $\{x \mapsto \ell x + t_i\}_{i=1}^n$ generates a Cantor set $K_1 \subseteq [0,1]$.
\end{itemize}
\begin{figure}
    \centering
    \includegraphics[width=0.9\linewidth]{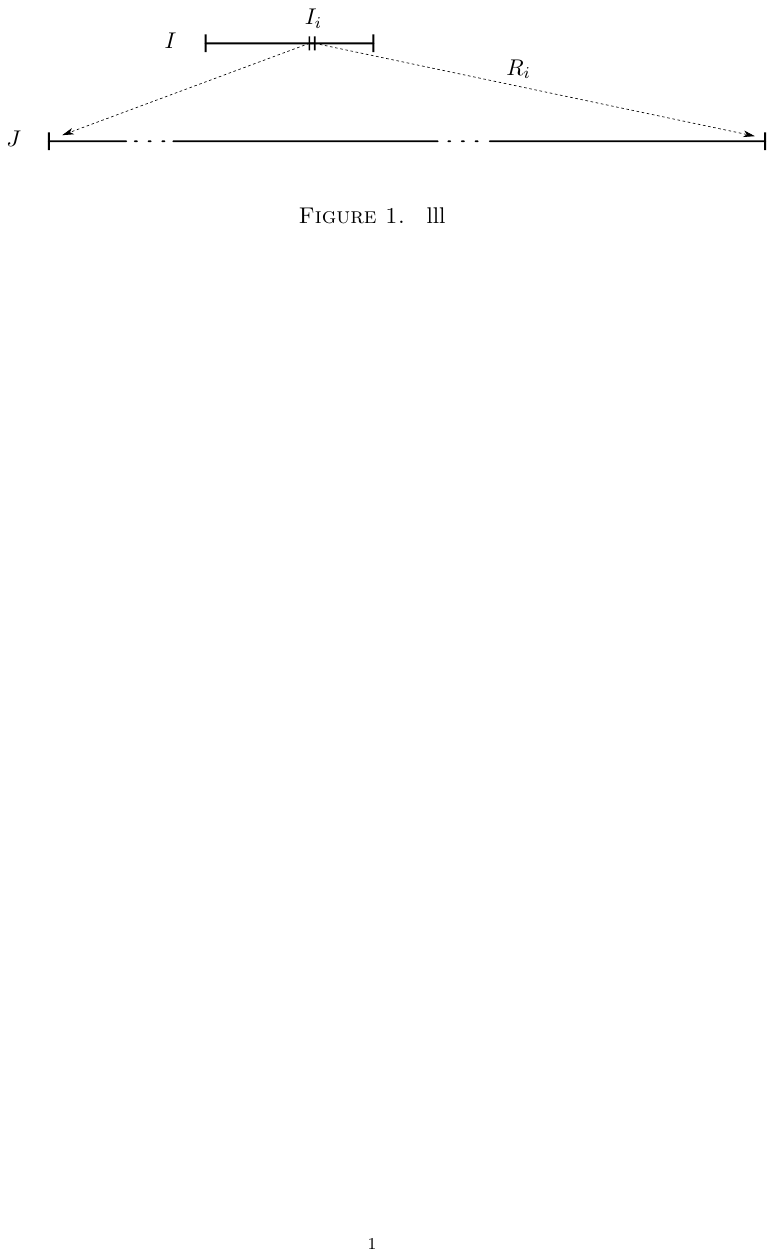}
    \caption{The renormalization operator $R_i$ maps the interval $I_i$ onto the whole interval $J$ via a homothety.}
    \label{fig1}
\end{figure}

\noindent Using the formulas in \eqref{eq:renormalization operators:K_1}-\eqref{eq:renormalization operators:K'_1} we have 
$$R_i^{-1}\left(\{a\}\times [z_{i},z_{i-1}] \right) = \{\ell^{-1}a\}\times [\ell^{-1}(z_{i}-t_i),\ell^{-1}(z_{i-1}-t_i)].$$
Thus, in order for the mapping to behave as required, we need
$$\ell^{-1}(z_{i}-t_i)=w_0-|J|,\;\ell^{-1}(z_{i-1}-t_i)= w_0.$$
Hence, it suffices to require that
\begin{equation}\label{eq:condition on lengths for the R_i operators}
    \boxed{\ell^{-1} \times \dfrac{|I|}{n} = |J|}
\end{equation}
and to choose the parameters $t_i$ as
\begin{align}\label{eq:def of ti}
     t_i := z_0 - \dfrac{(i-1)|I|}{n} - \ell w_0,\; i\in \{1,2,\cdots,n\}.
\end{align} 
Hence, $t_i-t_{i+1}=|I|/\ell$ and  $t_i\in [0,1-\ell]$ if and only if
$$1-\ell \geq z_0-\ell w_0,\;\;z_0 - \dfrac{(n-1)|I|}{n} - \ell w_0\geq 0.$$
Therefore, the constraints in (b) is reduced to
\begin{equation}\label{eq:condition of K is a cantor set or not}
   \boxed{\dfrac{|I|}{n}>\ell,}
\end{equation}
\begin{equation}\label{eq:z0-aw0}
    \boxed{1-\ell \geq z_0-\ell w_0 \geq \dfrac{(n-1)|I|}{n}.}
\end{equation}

\subsection*{Constraints on the construction of $K_1'$}
We partition the interval $J$ into 
$n'' := {n'}/{c}$ sub-intervals 
\[
J = \bigsqcup_{k=1}^{n''} J_k, \qquad 
J_k = [w_k, w_{k-1}], \quad 
w_k := w_0 - \frac{k |J|}{n''}, \quad k = 1, \dots, n''.
\]
 Since the family of operators $\{R'_j\}_{j=1}^{n'}$ acts like translations on the fibers, we define the intervals $\{\tilde{I}_l\}_{l=1}^{c}$ of length $|J|/n''$ by
$$\tilde{I}_l=\left[\tilde{z}_l-\dfrac{|J|}{n''}, \tilde{z}_l\right],\;
\tilde{z}_l := z_0-\delta a - 
\dfrac{c-l}{\beta(c-1)}\left(|I|-2\delta a-\dfrac{|J|}{n''}\right) ,\;l\in \{1,,2,\cdots,c\},$$
where $\beta\geq 2$ is a real number to be defined later.

\begin{figure}
    \centering
    \includegraphics[width=0.9\linewidth]{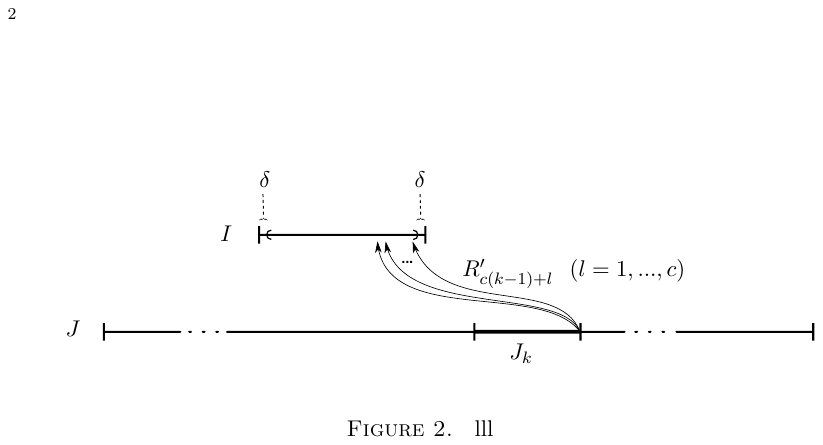}
    \caption{For all $1\leq l\leq c$, the operator $R'_{c(k-1)+l}$ maps the interval $J_k$ onto $\tilde{I}_l\subset I_{(\delta)}$ via a translation.}
    \label{fig2}
\end{figure}

\noindent We then choose the parameters $\{t'_j\}_{j=1}^{n'}$ such that: 
\begin{itemize}
    \item[(a$'$)] For $k = 1, \dots, n''$ and $l = 1, \dots, c$, the image of the operator 
    $R'_{c(k-1)+l}$ restricted to 
    \[
    \{\ell^{-1} a\} \times J_k = \{\ell^{-1} a\} \times [w_k, w_{k-1}]
    \]
    satisfies
    \[
    R'_{c(k-1)+l}(\{\ell^{-1} a\} \times J_k) = \{a\} \times \widetilde{I}_l.
    \]
    \item[(b$'$)] The parameters satisfy
    \[
    1-\ell \ge t'_{n'} > t'_{n'-1} > \cdots > t'_1 \ge 0, 
    \;\; 
    t'_{j+1} - t'_j > \ell \text{ for } j = 1, \dots, n'-1.
    \]
    This ensures that the iterated function system 
    $\{x \mapsto \ell x + t'_j\}_{j=1}^{n'}$ generates a Cantor set 
    $K_1' \subseteq [0,1]$.
\end{itemize}
Using the formulas in \eqref{eq:renormalization operators:K_1}-\eqref{eq:renormalization operators:K'_1} we have 
$$R'_{c(k-1)+l}\left(\{\ell^{-1}a\}\times [w_{k},w_{k-1}] \right) 
= \{a\}\times [w_{k}+\ell^{-1}a\cdot t'_{c(k-1)+l},w_{k-1}+\ell^{-1}a\cdot t'_{c(k-1)+l}].$$
Thus, in order for the mapping to behave as required, we need
$$
w_{k}+\ell^{-1}a\cdot t'_{c(k-1)+l} = \tilde{z}_l-\dfrac{|J|}{n''},\; 
w_{k-1}+\ell^{-1}a\cdot t'_{c(k-1)+l} = \tilde{z}_l.
$$
Hence, it suffices to choose the parameters $t_i$ as
\begin{equation}\label{eq:def of t'j}
t'_{c(k-1)+l}:=
\dfrac{\ell}{a}\left(z_0-\delta a -w_0 - \dfrac{c-l}{\beta(c-1)}\left(|I|-2\delta a-\dfrac{|J|}{n''}\right) +\dfrac{(k-1)|J|}{n''}\right).
\end{equation}
Next, using \eqref{eq:def of t'j}, we rewrite the constraint 
\(t'_{j+1} - t'_j > \ell\) appearing in (b$'$). Indeed,
$$t'_{c(k-1)+l+1}-t'_{c(k-1)+l}=\dfrac{\ell}{a\beta(c-1)}\left(|I|-2\delta a- \dfrac{|J|}{n''}\right)>\ell,\; l\in\{1,\cdots,c-1\},$$
$$t'_{ck+1}-t'_{c(k-1)+c} = \dfrac{\ell}{\beta a}\left(\dfrac{|J|}{n''}\left(\beta-1\right)-\left(|I|-2\delta a\right)
\right)>\ell,\; k\in \{1,\cdots,n''-1\}.$$
Therefore, it suffices to have 
\begin{align}\label{eq:1: constraint on K' become Cantor}
   \boxed{\dfrac{|J|}{a\cdot n''}(\beta-1) -\beta>\dfrac{|I|}{a}-2\delta >\dfrac{|J|}{a\cdot n''}+\beta(c-1).}
\end{align}
Finally, it remains to check the constraint $t'_j\in[0,1-\ell]$ which is included
in (b$'$). Since the sequence $(t'_j)_{j=1}^{n'}$ is strictly increasing, it is enough
to verify that $0\leq t'_1$ and $t'_{n'}\leq 1-\ell$. These are true if and only if
$$ z_0-\delta a-w_0\geq\dfrac{1}{\beta}\left(|I|-2\delta a-\dfrac{|J|}{n''}\right),
$$
$$ 
1-\ell \geq \dfrac{\ell}{a}
\left(z_0-\delta a-w_0+\dfrac{(n''-1)|J|}{n''}\right).
$$
Therefore, the constraints in (b$'$) are reduced to \eqref{eq:1: constraint on K' become Cantor} and 
\begin{equation}\label{eq:2: constraint on K' become Cantor}
   \boxed{\dfrac{a(1-\ell)}{\ell}-\dfrac{(n''-1)|J|}{n''}+\delta a\geq
    z_0-w_0 \geq \delta a\left(1-\dfrac{2}{\beta}\right) + \dfrac{1}{\beta}\left(|I|-\dfrac{|J|}{n''}\right).}
\end{equation}

The verification that the parameter choices given in \eqref{eq: parameter choosing 1},\eqref{eq: parameter choosing 2},\eqref{eq: parameter choosing 3} and \eqref{eq: parameter choosing 4} fulfill all the above conditions given in
\eqref{eq:dimHD conditions},
\eqref{eq:condition on lengths for the R_i operators},
\eqref{eq:condition of K is a cantor set or not},
\eqref{eq:z0-aw0},
\eqref{eq:1: constraint on K' become Cantor},
and
\eqref{eq:2: constraint on K' become Cantor}
is deferred to the subsequent arguments.

\subsection*{Simplification of the Constraints}

In this section, we reduce the previously obtained constraints to a simpler set of sufficient conditions. 
By strengthening certain inequalities and eliminating parameters that can be taken arbitrarily small, we obtain a manageable system that still guarantees the construction.

By \eqref{eq:condition on lengths for the R_i operators}, the length $|J|$ is determined in terms of $|I|$ and $\ell$. Moreover, $\delta$ appears only in \eqref{eq:1: constraint on K' become Cantor} and \eqref{eq:2: constraint on K' become Cantor}. Consequently, the existence of $\delta>0$ as an arbitrarily small positive number is equivalent to replacing the constraints \eqref{eq:1: constraint on K' become Cantor}, \eqref{eq:2: constraint on K' become Cantor} by the strict inequalities in which $\delta$ is set to zero. 
Thus, it suffices to impose the following two constraints, 
in which $\delta$ is omitted, $n''$ is replaced by $n'/c$, and, by \eqref{eq:condition on lengths for the R_i operators}, $|J|$ is replaced with $(n\ell)^{-1}|I|$. The first one is
\begin{equation}\label{eq:1: z0-w0}
\dfrac{a(1-\ell)}{\ell}-\dfrac{1}{\ell n}\left(1-\dfrac{c}{n'}\right) |I|>
    z_0-w_0 > \dfrac{|I|}{\beta}\left(1-\dfrac{c}{\ell n n'}\right)
\end{equation}
and the second one is
$$
    \dfrac{\ell^{-1} c|I|}{a\cdot n n'}(\beta-1) -\beta>\dfrac{|I|}{a} >\dfrac{\ell^{-1} c|I|}{a\cdot n n'}+\beta(c-1),
$$
which is equivalent to
\begin{equation}\label{eq:1: I/a}
    \dfrac{|I|}{a}\left(1-\dfrac{c}{\ell n n'}\right)>\beta(c-1),\qquad
    \dfrac{|I|}{a}\left(\dfrac{ c(\beta-1)}{\ell n n'}-1\right)>\beta.
\end{equation}
Therefore, we must have 
$\ell n n'\in (c,c(\beta-1))$ which is necessary for \eqref{eq:1: I/a} to hold but not enough. We impose the auxiliary constraint
\begin{equation}\label{eq:ann' condition}
    \ell n n' \in [2c,c(\beta-1)/2]
\end{equation}
on $a,n,n',\beta$ which allows us to replace
 \eqref{eq:1: I/a} by 
\begin{equation}\label{eq:final lower bound I/a}
    \dfrac{|I|}{a}>  2\beta(c-1).
\end{equation}
This is a stronger constraint since
$2\beta(c-1)>\beta$ and by \eqref{eq:ann' condition} we have that
$$1-\dfrac{c}{\ell n n'}>\dfrac{1}{2},\qquad
\dfrac{ c(\beta-1)}{\ell n n'}-1>2\beta(c-1).
$$
The only constraints on $z_0,w_0$ are given by \eqref{eq:1: z0-w0} and \eqref{eq:z0-aw0}, which require that
$z_0-w_0$ and $z_0-\ell w_0$ belong to certain intervals. Since the system of equations 
$$z_0-w_0 = x,\qquad z_0-\ell w_0= y$$
has a real solution for generic $x,y$, the existence of $z_0,w_0$ reduces to verifying that the intervals in \eqref{eq:1: z0-w0}, \eqref{eq:z0-aw0} have positive lengths.
Therefore, one can replace the constraints
\eqref{eq:z0-aw0}, \eqref{eq:1: z0-w0} with 
\begin{equation}\label{eq:upper bound I}
    1-\ell > \left(1-\dfrac{1}{n}\right) |I|,
\end{equation}

$$
\dfrac{a(1-\ell)}{\ell}-\dfrac{1}{\ell n}\left(1-\dfrac{c}{n'}\right) |I|>
\dfrac{|I|}{\beta}\left(1-\dfrac{c}{\ell nn'}\right)
$$
which is equivalent to
\begin{equation}\label{eq:I/a upper bound}
   \dfrac{|I|}{a}\left(\left(\dfrac{1}{\beta}+1\right)\left(1-\dfrac{c}{\ell n n'}\right)-1 +\dfrac{1}{\ell n}\right)< \ell^{-1}-1.
\end{equation}
By \eqref{eq:ann' condition} we have
$$
\left(\dfrac{1}{\beta}+1\right)\left(1-\dfrac{c}{\ell n n'}\right)-1\leq \left(\dfrac{1}{\beta}+1\right)\left(1-\dfrac{2}{\beta-1}\right)-1
$$
which is a negative number for $\beta>1$. Therefore, since $\ell^{-1}/2<\ell^{-1}-1$, the inequality \eqref{eq:I/a upper bound} is satisfied provided that 
\begin{equation}\label{eq:final upper bound on I/a}
   \dfrac{|I|}{a}<\dfrac{n}{2}.
\end{equation}
On the other hand, since $n\ell < 1$, the inequality \eqref{eq:upper bound I} is guaranteed by
imposing the additional constraint
\begin{equation}\label{eq:I<1}
    |I| < 1.
\end{equation}

In summary, the full collection of constraints 
\eqref{eq:dimHD conditions}, 
\eqref{eq:condition on lengths for the R_i operators}, 
\eqref{eq:condition of K is a cantor set or not}, 
\eqref{eq:z0-aw0}, 
\eqref{eq:1: constraint on K' become Cantor}, 
\eqref{eq:2: constraint on K' become Cantor}
is automatically satisfied once the simplified system of conditions
\eqref{eq:dimHD conditions}, 
\eqref{eq:condition of K is a cantor set or not}, 
\eqref{eq:ann' condition}, 
\eqref{eq:final lower bound I/a}, 
\eqref{eq:final upper bound on I/a}, 
and \eqref{eq:I<1}
holds. Therefore, the problem of choosing admissible parameters reduces to verifying only these latter inequalities.

\subsection*{The chosen parameters}

 Finally, we choose the parameters $|I|$ and $a$ as
\[
|I| := \sqrt{n \ell}, \qquad a := \sqrt{\ell},
\]
which clearly satisfy \eqref{eq:condition of K is a cantor set or not}, \eqref{eq:final lower bound I/a}, \eqref{eq:final upper bound on I/a} and \eqref{eq:I<1} provided that $n,\beta$ satisfy 
\begin{equation}\label{eq:I deleted}
    \sqrt{n}>2\beta(c-1),\; \sqrt{n}>2.
\end{equation}
Therefore, the existence of such construction for the Cantor sets $K_1,K_1'$ is depended on the bounds on $n,n',\ell,\beta$ obtained in \eqref{eq:dimHD conditions}, \eqref{eq:ann' condition} and \eqref{eq:I deleted}. By \eqref{eq:dimHD conditions}, it suffices to find $\ell,\beta$ and $\varepsilon'\in (0,\varepsilon)$ such that for 
$$n:=\lfloor\ell^{-(\gamma+\varepsilon')}\rfloor, \;\;
n':=\lfloor\ell^{-(1-\gamma+\varepsilon')}\rfloor$$
the relations \eqref{eq:I deleted} and \eqref{eq:ann' condition} are satisfied. Thus, we need to check that
$$\ell^{-2\varepsilon'}\in [2c,c(\beta-1)/2],\;\; \ell^{-(\gamma+\varepsilon')}>4\beta^2(c-1)^2+1$$
which can be achieved by choosing $$\beta:=7,\;\;,\;\ell := (3c)^{\frac{-1}{2\varepsilon'}}$$
where $\varepsilon'>0$ is any real number such that $\varepsilon'< \min\{\varepsilon,\gamma/7\}$. 
\begin{remark}
    Another constraint in the construction of $K_1'$ is that $n'$ must be divisible by $c$.
Recall that
\[
n'=\Big\lfloor (3c)^{\frac{1-\gamma+\varepsilon'}{2\varepsilon'}} \Big\rfloor.
\]
Since we are free to choose $\varepsilon' \in (0,\min\{\varepsilon, \gamma/7\})$, we may
decrease $\varepsilon'$ slightly until the resulting value of $n'$ becomes divisible by
$c$. 
\end{remark}

\subsection*{Thickness}
Given a homogeneous Cantor set $K$ the thickness of $K$ is given by
$$\tau(K)=\min_{i}{\dfrac{\ell}{|G_i|}}$$
where $\ell$ is the contraction ratio of the generators of $K$ and $G_i$ is any gap that appears in the first iteration of the construction of $K$.

We show that the thicknesses $\tau(K_1)$ and $\tau(K'_1)$ can be made arbitrarily small; in particular, the Cantor sets
$K_1$ and $K'_1$ are very thin.

By construction, $K_1$ is a homogeneous Cantor set with contraction ratio $\ell>0$ and equal-sized gaps of length
\[
\frac{1-n\ell}{n-1}.
\]
This implies that
\[
\tau(K_1)
= \frac{(n-1)\ell}{1-n\ell}
= n\ell \cdot \frac{n-1}{n(1-n\ell)}
< 2n\ell
\]
which tends to zero as $\varepsilon'\to 0$.

Unlike $K_1$, the gaps of $K'_1$ are not uniformly ordered, since the construction is designed to produce a
$c$-separable covering.
Nevertheless, this feature can be exploited to ensure that $K'_1$ is also a thin Cantor set while retaining the desired
$c$-separable covering property.

Let $N\in\mathbb{N}$ be arbitrary and consider the construction of the pair $(K_1,\tilde K'_1)$ with $c$ replaced by
$c' := cN$.
Then $\tilde K'_1$ is a homogeneous Cantor set generated by the family
\(
\{x \mapsto \ell x + t'_j\}_{j=1}^{n'}
\)
where
\[
0 \le t'_1 < t'_2 < \cdots < t'_{n'} \le 1-\ell,
\qquad
t'_{j+1}-t'_j > \ell,
\]
and such that $\{a\}\times I$ admits a $c'$-separable covering with respect to the renormalization operators associated
to the pair $(K_1,\tilde K'_1)$.
This covering is given by the families $\{\mathcal F_l\}_{l=1}^{c'}$, where
\[
\mathcal F_l
:= \bigl\{x \mapsto \ell x + t'_{c'(k-1)+l} \;:\; 1 \le k \le n'/c' \bigr\},
\qquad 1 \le l \le c'.
\]

\noindent
We now define $K'_1$ as the homogeneous Cantor set generated by the subfamily
\[
\bigl\{x \mapsto \ell x + t'_j \;:\;
j = c'(k-1)+Nl,\;
1 \le k \le n'/c',\;
1 \le l \le N
\bigr\},
\]
which ensures that $\{a\}\times I$ admits a $c$-separable covering with respect to the renormalization operators
associated to the pair $(K_1,K'_1)$.
By construction, most intervals of length $\ell$ are removed, creating large gaps.
In particular, each gap of $K'_1$ has length at least $N\ell$, and hence
\[
\tau(K'_1) \le \frac{\ell}{N\ell} = \frac{1}{N}.
\]
Thus, by taking $N$ sufficiently large, $\tau(K'_1)$ can be made arbitrarily small.

\providecommand{\bysame}{\leavevmode\hbox to3em{\hrulefill}\thinspace}

\end{document}